\documentclass{amsart}

\usepackage{amssymb}
\usepackage{amsmath}
\usepackage{amsfonts}
\usepackage{amsthm}
\usepackage{epsfig}
\usepackage{amscd}
\usepackage{stmaryrd}
\usepackage{hyperref}
\usepackage[all]{xy}
   \SelectTips{cm}{10}
\pagestyle{plain}

\renewcommand{\ell}{l}

\newtheorem{theorem}{Theorem}[section]
\theoremstyle{definition}
\newtheorem{prop}[theorem]{Proposition}
\newtheorem{lemma}[theorem]{Lemma}
\newtheorem{sublemma}[theorem]{Sublemma}

\newtheorem{df}[theorem]{Definition}
\newtheorem{remark}[theorem]{Remark}
\newtheorem{corr}[theorem]{Corollary}

\def\wpi{\widetilde{\pi}}
\def\sc{\mathrm{sc}}
\def\C{\mathbf{C}}

\def\h{\mathfrak{h}}
\def\t{\mathfrak{t}}
\def\g{\mathfrak{g}}
\def\e{\mathfrak{e}}
\def\f{\mathfrak{f}}
\def\sl{\mathfrak{sl}}
\def\so{\mathfrak{so}}
\def\sp{\mathfrak{sp}}

\def\mr{\mathrm}
\def\tv{\tilde{v}}

\def\Out{\mathrm{Out}}
\def\rank{\mathrm{rank}}
\def\SL{\mathrm{SL}}

\def\GL{\mathrm{GL}}
\def\Fbar{\overline{\F}}
\def\F{\mathbf{F}}
\def\Qbar{\overline{\Q}}
\def\Q{\mathbf{Q}}
\def\Z{\mathbf{Z}}
\def\Gal{\mathrm{Gal}}

\def\R{\mathbf{R}}
\def\Zbar{\overline{\Z}}

\def\min{\mathrm{min}}

\def\rbar{\overline{r}}
\def\rhobar{\overline{\rho}}
\def\pr{\mathrm{pr}}

\def\TT{S_{\mathrm{reg}}}
\def\elll{l}
\def\kapppa{\epsilon}
\def\Gammma{{G}}
\def\ww{\widetilde{w}}

\thanks{G.B. was supported in
part by NSF postdoctoral fellowship DMS-1503047,
F.C. was supported in part by NSF Grant
  DMS-1701703, M.E. was supported in part by NSF
  Grant DMS-1601871,  K.M.P. was supported in part by NSF
  Grant DMS-1803623, S.P. was supported in part by NSF
  Grant DMS-1700759.}

\begin{document}

\title{Compatible Systems of Galois Representations Associated To The Exceptional Group~$E_6$}

\author[G.~Boxer]{George Boxer}
\address{Department of Mathematics, University of Chicago, 5734 S University Ave
Chicago, IL 60637}
\email{gboxer@math.uchicago.edu}
\author[F.~Calegari]{Frank Calegari}
\address{Department of Mathematics, University of Chicago, 5734 S University Ave
Chicago, IL 60637}
\email{fcale@math.uchicago.edu}
\author[M.~Emerton]{Matthew Emerton}
\address{Department of Mathematics, University of Chicago, 5734 S University Ave
Chicago, IL 60637}
\email{emerton@math.uchicago.edu}
\author[B.~Levin]{Brandon Levin}
\address{Department of Mathematics,
University of Arizona, 
617 N. Santa Rita Avenue, 
Tucson, Arizona 85721}
\email{bwlevin@math.arizona.edu}
\author[K.~Madapusi Pera]{Keerthi  Madapusi Pera}
\address{Department of Mathematics, Boston College,
Chestnut Hill, MA 02467}
\email{keerthi.madapusipera@bc.edu}
\author[S.~Patrikis]{Stefan Patrikis}
\address{Department of Mathematics, The University of Utah, 155 S 1400 E, Salt Lake City, UT 84112}
\email{patrikis@math.utah.edu}
\maketitle

\section{Introduction}

In~\cite{SerreMotives}, Serre raised the question of whether~$G_2$ or~$E_8$ was the 
motivic Galois group of a motive~$M$ over a number field, and one can evidently ask the same question for the other exceptional simple Lie groups. A slightly weaker 
version of this question asks for a motive~$M$ such that the associated~$p$-adic Galois
representations have algebraic monodromy group  equal to the exceptional group in question.
In this form, Serre's question was answered in the affirmative by Yun in~\cite{Yun}, who also dealt with
the case of the exceptional group~$E_7$. (A stronger version of the question for the group~$G_2$
had previously been answered by Dettweiler and Reiter~\cite{DR}.) This left open the cases of~$E_6$ or~$F_4$. In~\cite{Patrikis}, the last author of this paper succeeded in constructing
geometric Galois representations for the remaining exceptional groups using arguments inspired
by Ramakrishna's lifting theorems~\cite{Ramakrishna}, at least for a set of primes~$p$ of density one (improved
to all but finitely many in Theorem~1.2 of~\cite{Patrikis2}). While this answered a weak form
of (the~$E_6$ analogue of) Serre's question, the Galois
representations constructed in~\cite{Patrikis,Patrikis2} did not obviously come
from motives~$M$ or from compatible systems of Galois representations
(although that would certainly be a consequence of the Fontaine--Mazur conjectures~\cite{FM}).
The main goal of this paper is to remedy this lacuna for the group~$E_6$.

\begin{theorem} \label{theorem:main}
Let~$F/F^{+}$ be a totally imaginary CM field with maximal totally real subfield~$F^{+}$. Let~$G$  denote the 
simply connected form of~$E_6$, and fix a minuscule representation~$G \rightarrow \GL_{27}$. 
Then there exists a strongly compatible system of Galois representations with coefficients in a number field~$M$ such that
the representations
$$r_{\lambda}: G_{F} \rightarrow G(\overline{M}_{\lambda}) \hookrightarrow \GL_{27}(\overline{M}_{\lambda})$$
have images with  Zariski closure~$G(\overline{M}_{\lambda})$ for all primes~$\lambda$.
Moreover, this compatible system is potentially automorphic and motivic in the sense that there is a CM extension $H/F$ such that:
\begin{itemize}
\item There a cuspidal automorphic representation $\pi$ for $\GL_{27}/H$ such that $r_\lambda|_{G_H}$ is the compatible system of Galois representations associated to $\pi$.
\item The compatible system $r_{\lambda}$ satisfies the conclusion of the Fontaine--Mazur conjecture: there is a smooth projective variety $X/F$ and integers $i$ and $j$ such that $r_{\lambda}$ is a $\Gammma_F$-sub-representation of $H^i(X_{\overline{F}}, \Qbar_{\ell}(j))$. 

\end{itemize}

\end{theorem}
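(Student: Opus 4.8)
The plan is to construct the compatible system by a modularity-lifting strategy modeled on the exceptional-group deformation arguments of Patrikis (see \cite{Patrikis,Patrikis2}) together with recent potential automorphy theorems over CM fields. First I would produce, for a carefully chosen auxiliary prime $p$, a geometric Galois representation $\rho\colon G_F\to G(\Qbar_p)$ with Zariski-dense image, arising from the Ramakrishna-style lifting machine: one starts with a suitable mod-$p$ representation $\rhobar\colon G_F\to G(\Fbar_p)$ whose image is large enough to satisfy the requisite cohomological vanishing ($H^1$ and $H^2$ conditions on the adjoint, bigness/adequacy), then lifts it to characteristic zero imposing Fontaine--Laffaille (or ordinary/crystalline) local conditions at $p$ and Steinberg-type conditions at an auxiliary set of places. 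The source of $\rhobar$ will be the reduction of one member of an already-available compatible system, or a construction as in \cite{Patrikis}, arranged so that the residual image contains a regular unipotent or is otherwise irreducible enough that the $27$-dimensional composite $\rhobar\circ(G\hookrightarrow\GL_{27})$ is adequate.

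Next I would invoke potential automorphy: composing $\rho$ with $G\hookrightarrow \GL_{27}$ gives a geometric $27$-dimensional representation that is regular (distinct Hodge--Tate weights can be arranged because the minuscule $27$ has a regular highest weight for a regular dominant choice) and essentially (conjugate) self-dual — note $E_6$ has an outer automorphism exchanging the two $27$-dimensional representations, so $r_\lambda^\vee$ is conjugate to a twist of $r_\lambda^{c}$, which is exactly the polarization needed to apply the automorphy lifting and potential automorphy theorems for $\GL_n$ over CM fields (ACC/BLGGT-style results, now available unconditionally over CM fields). This yields a CM extension $H/F$ and a cuspidal $\pi$ for $\GL_{27}/H$ with $r_p|_{G_H}$ attached to $\pi$; one then \emph{defines} the compatible system $\{r_\lambda\}$ as the one attached to $\pi$, and descends it to $G_F$ using that $\rho$ itself is defined over $G_F$ and the system is rigid. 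The Fontaine--Mazur conclusion (realization inside the cohomology of a smooth projective $X/F$) comes for free from the potential automorphy method, since the automorphy is proved by finding the representation in the cohomology of a family of (twisted) Dwork/Kummer motives or Calabi--Yau varieties; one then takes a suitable restriction of scalars / Galois-fixed part to get an honest $X/F$.

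The remaining point is to upgrade the existence of a single $\rho$ at one prime to a \emph{strongly compatible system} with algebraic monodromy group $G$ at \emph{every} $\lambda$, with coefficients in a fixed number field $M$. For the coefficient field and compatibility, I would use that the system attached to $\pi$ is strongly compatible (local-global compatibility at all finite places, including $p$, is now known for the relevant $\GL_n$ automorphic representations over CM fields), and that it has coefficients in the number field generated by the Hecke eigenvalues of $\pi$; the $G$-structure at all $\lambda$ follows because it holds at one $\lambda$ and the full monodromy group is ``constant'' in a compatible system (the component group and the identity component of the algebraic monodromy group are independent of $\lambda$, by work of Serre, using that $E_6$ is determined among reductive subgroups of $\GL_{27}$ acting irreducibly by, e.g., its dimension or its being its own normalizer). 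Concretely: at one prime the Zariski closure is $G$; at any other $\lambda$ the Zariski closure is a reductive subgroup of $\GL_{27}$ containing a conjugate of the image, with the same $27$ being irreducible and with matching Frobenius characteristic polynomials, which pins it down to $G$ again.

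The hard part will be the interface between the two halves: arranging the residual representation $\rhobar$ so that it is simultaneously (i) liftable by the Ramakrishna method to a geometric $\rho$ with $G$-monodromy, (ii) has adequate/big image so the $27$-dimensional potential automorphy theorems apply, and (iii) is ``potentially diagonalizable'' or Fontaine--Laffaille at $p$ with distinct Hodge--Tate weights so that automorphy lifting is available — all while keeping the exceptional outer-twisted self-duality intact. This is precisely the technical heart that \cite{Patrikis,Patrikis2} had to confront for the mere existence of $\rho$; here one must in addition be careful that the auxiliary local conditions (Steinberg places, the choice of $H$) are compatible with the CM descent and do not destroy the compatible-system structure. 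I would expect the proof to spend most of its effort verifying these large-image and local-condition hypotheses for an explicit residual representation, and then to conclude relatively formally by citing the potential automorphy and local-global compatibility results over CM fields.
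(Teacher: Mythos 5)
Your overall architecture (seed $\rhobar$, geometric lift, potential automorphy via the outer-twisted self-duality, compatible system, then control of monodromy at all $\lambda$) matches the paper's, but there are two genuine gaps. First, the construction of $\rhobar$: you propose taking it from ``a construction as in \cite{Patrikis}'' or from an existing compatible system, but the paper explicitly notes that the Patrikis seed representations (modular forms composed with the principal $\SL_2$) are unusable here because their composition with $r_{\min}$ is \emph{reducible} ($\mathrm{S}^{16}\oplus \mathrm{S}^8\oplus \mathrm{S}^0$), so the adequacy/irreducibility hypotheses of the $\GL_{27}$ potential automorphy theorems fail. The actual solution is the content of Section 2: one realizes a Sylow $3$-subgroup $P$ of $W_{E_6}$ as a Galois group by Scholz--Reichardt (with local splitting conditions), lifts it into $N_G(T)(\F_p)\rtimes\Out(G)$ using that the extension by $T(\Z)$ splits over $P$ (a $3$-group acting on a $2$-torsion module), and twists by carefully chosen classes in $H^1(\Gammma_{F^+},T(\F_p)[\elll])$; irreducibility of $r_{\min}\circ\rhobar$ then follows from the transitivity of $P$ on the $27$ weights. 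This is the technical heart you identify as ``the hard part'' but do not actually supply. (Relatedly, the paper does not lift by the Ramakrishna method: it runs a Khare--Wintenberger argument, showing the $\mathcal{G}_{27}$-deformation ring is finite over $\Z_p$ by BLGGT and the $G$-deformation ring has positive dimension, so the latter has a $\Qbar_p$-point that is automatically potentially automorphic.)

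Second, your argument that the monodromy group is $G$ at every $\lambda$ does not work as stated. The identity component of the algebraic monodromy group is \emph{not} known to be independent of $\lambda$ in a compatible system, and irreducibility of the $27$-dimensional representation at all $\lambda$ is precisely what must be proved, not an input. What Larsen--Pink give is constancy of the component group, the rank, and the formal character of a maximal torus. The paper must then rule out every decomposition $27=26+1=18+9=17+10=17+9+1$ by combining (i) combinatorics of the $E_6$ minuscule weight set (any four weights spanning a rank-$\le 2$ quotient would force two distinct zero-sum triples, which is impossible), (ii) a classification of small irreducible representations of reductive Lie algebras, and (iii) the existence at all $\lambda$ of a unipotent element with Jordan blocks $1,9,17$ coming from local-global compatibility at the Steinberg-type auxiliary primes --- and the set $\TT$ is deliberately chosen to contain \emph{two} such primes so that at least one has residue characteristic prime to $\lambda$. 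Without this apparatus, ``matching Frobenius characteristic polynomials pins it down to $G$'' is not a proof.
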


The main idea of the paper is to follow the strategy of~\cite{Patrikis}, but to replace  the  lifting theorems inspired by Ramakrishna with those
inspired by the work of Khare and Wintenberger~\cite{KWI,KWII},  exploiting the modularity lifting 
theorems of~\cite{BLGGT}. In order to do this, one must link~$G$-representations with~$\GL_n$
representations by choosing some (faithful) representation~$r \colon G \to \GL_{n}$. The methods
of~\cite{BLGGT} require that the corresponding Galois representations have \emph{distinct}
Hodge--Tate weights. This imposes a strong restriction on the representation~$r$, namely, that
its formal character should be multiplicity-free.  
In particular, the method of this paper only applies to the exceptional groups~$G_2$,
$E_6$, and~$E_7$, in their quasi-minuscule ($G_2$) or minuscule ($E_6$ and $E_7$) representations (we concentrate on~$E_6$ because other methods are available in the other cases).
As in~\cite{Patrikis,Patrikis2}, we require a seed representation~$\rhobar: G_{F} \rightarrow G(\Fbar_p)$
from which to construct geometric lifts. In~\cite{Patrikis}, suitable representations~$\rhobar$
came from composing representations associated to modular forms with the principal~$\SL_2$. These
representations are not suitable for our purposes, because their composition with the minuscule
representation is reducible. Instead, we construct a representation  related  to the action
of the Weyl group of~$E_6$ on the weight space of our representation. The fact that the
 representation we consider is irreducible in~$\GL_n$  relies on the assumption that~$r$
is minuscule.
The reason we succeed in controlling the monodromy groups at \emph{all} primes is a consequence
of elementary combinatorial properties of the formal character of~$E_6$ (using ideas of Larsen and Pink~\cite{LP}) together 
with our ability to exploit 
independence of~$p$ results in compatible systems of Galois representations associated to 
automorphic forms (\cite{Yoshida, MR2800722, Caraiani}).

We end the introduction with some remarks on what the methods of this paper cannot do.

\begin{remark}{\bf Galois representations for $\Q$ versus imaginary quadratic fields.} Our construction gives compatible systems of~$E_6$-representations over any imaginary quadratic field~$F$; these extend to representations of $G_{\Q}$ whose image is Zariski-dense in the L-group $G \rtimes \Out(G)$ of an outer form of $E_6$.
We leave open the question as to whether actual $E_6$-systems exist over~$\Q$, noting that the methods of
this paper will not succeed in constructing them. Indeed, our methods require that the corresponding
Galois representations have regular weight, and there do not exist any such compatible systems of Galois
representations over~$\Q$ (see Remark~\ref{remark:irregular}).
\end{remark}

\begin{remark} {\bf Motives versus motives with coefficients\rm}. We ultimately
construct compatible systems  of~$27$-dimensional
Galois representations for a coefficient field~$L$ over which we have little control. One can ask the more refined question
of whether there exists a motive~$M$ with coefficients over~$\Q$ of type~$E_6$ (Yun's result~\cite{Yun} 
answer this question in the affirmative for~$E_8$).  One reason that this refinement is interesting is that it would have applications to the to the inverse Galois problem (see Corollary \ref{cor}).  

 It seems to us that the Galois theoretic methods
of either this paper or of~\cite{Patrikis,Patrikis2} are unsuited to answering such a question. It is illustrative
to consider the simpler case of~$\GL_2$. By constructing a geometric Galois representation~$\rho: G_{\Q} \rightarrow \GL_2(\Q_p)$, one can hope to prove it is automorphic and hence associated to a modular form~$f$,  and thus to
construct  a  corresponding
motive~$M_f$ (\cite{MR1047142}). But it seems very hard to impose conditions on~$\rho$ to ensure that the form~$f$ has coefficients in~$\Q$. For example, in weight~$2$ (on the modular form side) one would want to put conditions on~$\rho$ to ensure that it actually come from an elliptic curve rather than an abelian variety of~$\GL_2$-type. In practice, we actually work in highly regular
weight, and there is a certain amount of numerical evidence~\cite{Dave} pointing to the fact that there may not exist any 
motives~$M$ at all with coefficients in~$\Q$ and monodromy group~$\GL_2$ with 
Hodge--Tate weights~$[0,k-1]$ when~$k>50$.
\end{remark}

\section{The mod $p$ representation}

In this section, we construct the mod $p$ representations that we will lift in the next section using potential automorphy theorems.

\begin{df}
Let~$G$ be the split simply-connected reductive group scheme over $\Z$ of type $E_6$. Fix a pinned based root datum of $G$, and let ${}^{L}G= G \rtimes \Out(G)$ with the non-trivial element $\tau$ of $\Out(G)= \Z/2\Z$ acting through the corresponding pinned automorphism. The induced action of $\tau$ on $W_G$ is conjugation by the longest element $w_0$, since $\tau$ acts on $T$, the maximal torus of the pinning, by the opposition involution $-w_0$. Let~$W = W_G \rtimes \Out(G)$, and continue to write $\tau$ for the non-trivial element of $\Out(G) \subset W$. We will write $B$ for the Borel subgroup of $G$ associated to the based root datum.
\end{df}

\begin{remark}\label{outerweyl} \emph{There is an isomorphism~$W \simeq W_G \times \Z/2\Z$ given by the identity on $W_G$ and sending~$\tau$ to~$(w_0,1)$.}
\end{remark}

We fix, once and for all, a choice of minuscule representation~$r_{\min} \colon G \to \GL_{27}$, writing $\Lambda_{\mr{min}}$ for the weights of $T$ in $r_{\mr{min}}$. Now we explain how to extend $r_{\min}$ to ${}^{L}G$. Let $\mathcal{G}_{27}= (\GL_{27} \times \GL_1) \rtimes \Z/2\Z$, where the non-trivial element $\jmath \in \Z/2\Z$ acts via $\jmath  \kern+0.1em{(g, \mu)}\jmath^{-1}= (\mu \cdot {}^t g^{-1}, \mu)$. The representation $g \mapsto r_{\min}(\tau g \tau^{-1})$ is isomorphic to the dual minuscule representation of $G$, so there exists $A \in \GL_{27}$ such that $r_{\min}(\tau g \tau^{-1})= A \cdot {}^t r_{\min}(g)^{-1} A^{-1}$. Iterating, we see that $A \cdot {}^t A^{-1}$ commutes with $r_{\min}$, so must be a scalar: $A= {}^t A \cdot \varepsilon$. Clearly $\varepsilon \in \{\pm 1\}$, and since 27 is odd, we must in fact have $\varepsilon =1$ by considering determinants. 

We can now extend $r_{\min}$ to 
\[
r_{\min} \colon {}^{L}G \to \mathcal{G}_{27}
\]
by $r_{\min}(\tau)= (A, \varepsilon, \jmath)= (A, 1, \jmath)$. The fact that $\varepsilon=1$ has the following consequence, which we will need later:
\begin{lemma} 
Let $\nu \colon \mathcal{G}_{27} \to \mathbf{G}_m$ be the character given by $\nu(g, a, 0)= a$, $\nu(\jmath)=-1$. Let $x \in {}^{L}G$ be any element with non-trivial projection to $\Out(G)$ (the case of interest will be $x$ such that conjugation by $x$ induces a split Cartan involution of $G$). Then $\nu \circ r_{\min}(x)=-1$.
\end{lemma}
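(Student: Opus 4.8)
The plan is to reduce at once to the case $x = g\tau$ with $g \in G$, since every element of ${}^{L}G$ with non-trivial image in $\Out(G) = \langle \tau \rangle$ has this form. As a preliminary, I would record the elementary point that $G$, being semisimple and simply connected, admits no non-trivial characters; hence $\nu \circ r_{\min}$ is trivial on $G$, and together with connectedness of $G$ this forces $r_{\min}(G) \subseteq \GL_{27} \times \{1\} \times \{0\} \subset \mathcal{G}_{27}$. In other words, the extended $r_{\min}$ restricts on $G$ to the original minuscule representation, placed in the first factor, so that $r_{\min}(g)$ is the element $(r_{\min}(g), 1, 0)$.

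The heart of the argument is then just unwinding the multiplication law of $\mathcal{G}_{27} = (\GL_{27}\times\GL_1)\rtimes\Z/2\Z$. Using that $r_{\min}$ is a homomorphism and that $r_{\min}(\tau) = (A, 1, \jmath)$ --- where the middle coordinate equals $1$ precisely because we have shown $\varepsilon = 1$ --- one computes
\[
r_{\min}(x) = r_{\min}(g)\,r_{\min}(\tau) = (r_{\min}(g), 1, 0)\cdot(A, 1, \jmath) = (r_{\min}(g)A, 1, \jmath),
\]
the only things to check being that the $\GL_1$-component of the product is $1\cdot 1 = 1$ and the $\Z/2\Z$-component is $\jmath$, both immediate since the left factor lies in the identity component (so no $\jmath$-twisting occurs). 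Finally, since $\nu$ is a group homomorphism, factoring any element of the $\jmath$-coset as $(h,\mu,0)\cdot\jmath$ gives $\nu(h,\mu,\jmath) = \nu(h,\mu,0)\,\nu(\jmath) = \mu\cdot(-1)$; applying this to $r_{\min}(x)$ with $\mu = 1$ yields $\nu(r_{\min}(x)) = -1$.

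I do not expect any genuine obstacle: the whole content of the statement is the fact, established just above, that $\varepsilon = 1$ (had $\varepsilon$ equalled $-1$ the identical computation would produce $+1$ instead), and everything else is bookkeeping with the semidirect-product structure. The parenthetical remark about split Cartan involutions plays no role in the proof itself --- it merely flags the intended later application --- so I would not address it here.
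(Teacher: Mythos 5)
Your proof is correct, and it is exactly the computation the paper has in mind: the paper gives no proof at all, simply asserting the lemma as an immediate consequence of $\varepsilon=1$, and your unwinding of the semidirect-product multiplication (together with the observation that $\nu\circ r_{\min}$ is trivial on the semisimple simply connected group $G$) is the intended bookkeeping. Your closing remark correctly identifies that the only real content is $\varepsilon=1$.
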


\begin{lemma}\label{Lconj} Let $E^+/F^+$ be a Galois extension of totally real fields whose Galois group is identified with a subgroup $P$ of $W_G$, and let $F/F^+$ be a quadratic totally imaginary extension. Let $E=F.E^+$.
Then the composite
\[
 \Gal(E/F^+) \xrightarrow{\sim} \Gal(E^+/F^+) \times \Gal(F/F^+) \xrightarrow{\sim} P \times \Z/2\Z \subset W_G \times \Z/2\Z \xrightarrow{\sim} W
\]
sends complex conjugation to $(w_0, \tau) \in W= W_G \rtimes \Out(G)$. In this setting we will write ${}^L P$ for the image of $P \times \Z/2\Z$ in $W$.
\end{lemma}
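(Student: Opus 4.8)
The plan is to unwind the three displayed isomorphisms and track a complex conjugation through each; the only substantive input is that $E^+$ is totally real while $F$ is totally imaginary, and the rest is bookkeeping. First I would justify the first arrow, i.e.\ that restriction gives an isomorphism $\Gal(E/F^+) \xrightarrow{\sim} \Gal(E^+/F^+) \times \Gal(F/F^+)$. Since $E^+/F^+$ and $F/F^+$ are both Galois, so is their compositum $E = F\cdot E^+$ over $F^+$, and the restriction map to the product is injective because $E$ is generated by $E^+$ and $F$; it is therefore an isomorphism as soon as $E^+ \cap F = F^+$. But $E^+ \cap F$ is a subfield of $F$ that is totally real (being contained in $E^+$) and contains $F^+$, and $F^+$ is by hypothesis the maximal totally real subfield of $F$, so $E^+\cap F = F^+$, as needed. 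Under this isomorphism and the given identifications $\Gal(E^+/F^+)\cong P$, $\Gal(F/F^+)\cong\Z/2\Z$, the first two arrows of the composite become explicit.

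Next I would compute the image of a complex conjugation $c \in \Gal(E/F^+)$. Fix an embedding $E \hookrightarrow \C$ and let $c$ be the induced complex conjugation. Its restriction to $E^+$ lands in $\R$ because $E^+$ is totally real, so $c|_{E^+}$ is trivial; its restriction to $F$ does not land in $\R$ because $F$ is totally imaginary, so $c|_F$ is the nontrivial element of $\Gal(F/F^+)$. Hence $c$ maps to $(1,1) \in P \times \Z/2\Z \subset W_G \times \Z/2\Z$ under the first two isomorphisms. (A different choice of embedding only replaces $c$ by a conjugate and changes nothing; in fact $(1,1)$ is central in $W_G \times \Z/2\Z$, consistent with its being the image of the whole complex-conjugacy class.)

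Finally I would apply the isomorphism $W_G \times \Z/2\Z \xrightarrow{\sim} W$ of Remark~\ref{outerweyl}. By construction it is the inverse of the isomorphism $W \xrightarrow{\sim} W_G \times \Z/2\Z$ that is the identity on $W_G$ and sends $\tau$ to $(w_0,1)$; writing $(1,1) = (w_0,0)^{-1}(w_0,1)$ and using $w_0^2 = 1$, we see it carries $(1,1)$ to $w_0^{-1}\tau = w_0\tau$, which is precisely the element $(w_0,\tau)$ of $W = W_G \rtimes \Out(G)$. This is the claimed image of complex conjugation; the closing sentence of the lemma merely introduces the notation ${}^L P$ for the image of $P\times\Z/2\Z$ in $W$ and requires no further argument.

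I do not anticipate a real obstacle: the proof is elementary. The single point that needs care is the convention of Remark~\ref{outerweyl} — keeping straight that the generator $(1,1)$ of the direct factor $\Z/2\Z$ corresponds to $w_0\tau$ rather than to $\tau$ — since getting this backwards would yield the wrong element; everything else (the product decomposition of $\Gal(E/F^+)$ and the behaviour of complex conjugation on a totally real versus totally imaginary field) is standard.
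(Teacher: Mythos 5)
Your proof is correct and takes the same route as the paper, whose entire proof is a one-line appeal to the definition of the isomorphism $W_G \times \Z/2\Z \xrightarrow{\sim} W$ from Remark~\ref{outerweyl}; you simply spell out the unwinding (the product decomposition via $E^+ \cap F = F^+$, the triviality of $c$ on the totally real field $E^+$ versus its nontriviality on $F$, and the computation $(1,1) \mapsto w_0\tau$), and you correctly handle the one point of care, namely that the generator of the direct factor $\Z/2\Z$ corresponds to $w_0\tau$ and not to $\tau$.
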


\begin{proof} This follows from the definition of the isomorphism $W_G \times \Z/2\Z \xrightarrow{\sim} W$ (Remark \ref{outerweyl}).
\end{proof}

We now construct the mod $p$ Galois representation. Let $P$ be a Sylow 3-subgroup of $W_G$; we note that $|W_G|= 2^7 \cdot 3^4 \cdot 5$. 
Let~$F/F^+$ denote a fixed totally imaginary quadratic extension of our fixed totally real field $F^+$. Our starting point will be to construct $P$ as a Galois group over $F^+$, with some local restrictions whose significance will become apparent when we apply potential automorphy theorems.

\begin{lemma} There exists a totally real Galois extension~$E^{+}/F^{+}$  together with an equality~$\Gal(E^{+}/F^{+}) = P$ (we write ``$=$'' to denote a fixed isomorphism)
and all the primes of $F^+$ which ramify in~$E^{+}$ are split in~$F/F^{+}$.
\end{lemma}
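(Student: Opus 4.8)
The statement is a realization-of-finite-groups-as-Galois-groups result with a local splitting condition, and the natural tool is an inverse-Galois result over totally real fields together with control of ramification. The plan is to realize $P$ as the Galois group of an extension $E^+/F^+$ inside which all ramified primes are split in the fixed CM extension $F/F^+$, by a two-step argument: first produce \emph{some} $P$-extension of $F^+$ (or of an auxiliary totally real field) with controlled, finite ramification, and then twist/adjust it so that the finitely many ramified primes avoid the finitely many primes that are inert or ramified in $F/F^+$.

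First I would invoke a known inverse Galois theorem over $\Q$ (or over $F^+$) for the group $P$: since $P$ is a solvable $3$-group (a Sylow $3$-subgroup of $W_G$, so $|P| = 3^4$), Shafarevich's theorem realizes $P$ as a Galois group over any number field, and in fact the construction can be arranged to be unramified outside a prescribed finite set together with finitely many auxiliary primes that one is free to choose. More precisely, for a solvable group one builds the extension as a tower of abelian steps, and at each step class field theory lets one choose the ramified primes from a set of positive density; so one can insist that every ramified prime of $F^+$ splits in $F/F^+$ (the split primes form a set of density $1/2$, in particular infinite, and Chebotarev gives enough of them in any prescribed congruence/splitting class). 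One also needs the extension to be totally real: this can be arranged because the archimedean places impose only a finite constraint, and for a $3$-group the decomposition at real places is automatically trivial (a $3$-group has no element of order $2$), so \emph{any} $P$-extension of a totally real field is totally real.

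A cleaner alternative, if one prefers not to re-run Shafarevich by hand, is to start from \emph{any} realization $L/F^+$ with group $P$ (unramified outside a finite set $S$), let $S_0 \subseteq S$ be the subset of primes that fail to split in $F/F^+$, and remove the bad ramification by a base change: choose a totally real solvable extension $F'/F^+$, linearly disjoint from $L$, in which every prime of $S_0$ splits completely; such an $F'$ exists by weak approximation / Grunwald–Wang type arguments subject to the Wang exceptions, which do not obstruct here because the local conditions are ``split completely'' at each $v \in S_0$. Then $L \cdot F' / F'$ still has group $P$, and its ramified primes lie above $S \setminus S_0$ together with primes of $F'$ lying over primes split in $F/F^+$; one checks the latter are still split in $F \cdot F' / F'$. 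This does change the base field from $F^+$ to $F'$, which is harmless for the later arguments since one only needs \emph{some} totally real $F^+$ with these properties — but if the statement as written insists on the \emph{given} $F^+$, then the first approach (building the tower directly over $F^+$ with ramification confined to split primes) is the one to use.

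The main obstacle is the bookkeeping in the solvable tower: at each abelian layer one must simultaneously (i) keep the group equal to the intended quotient of $P$ (not just a quotient of it), which requires a Chebotarev/density argument to find enough auxiliary ramified primes realizing the needed characters, (ii) ensure those auxiliary primes are split in $F/F^+$, and (iii) preserve total reality. Points (ii) and (iii) are compatibility of finitely many Frobenius/splitting conditions at disjoint sets of primes, handled by Chebotarev and weak approximation; the genuinely delicate point is (i), ensuring the Galois group does not collapse — this is exactly where Shafarevich's embedding-problem machinery for solvable groups is needed, and I would cite it rather than reprove it. Since $P$ here is a small $3$-group with well-understood structure, none of this presents any essential difficulty beyond careful quotation of the literature.
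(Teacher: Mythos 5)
Your overall strategy --- realize the odd-order $3$-group $P$ by the Scholz--Reichardt/Shafarevich method, choose the auxiliary ramified primes by \v{C}ebotarev so that they split in $F/F^{+}$, and observe that total reality is automatic because $|P|$ is odd --- is exactly the strategy of the paper. But the two points you wave at are where the actual content lies, and the paper organizes things differently precisely to handle them. First, the paper does \emph{not} build the tower over $F^{+}$: it runs Scholz--Reichardt over $\Q$ (citing Serre's notes verbatim, modulo one extra condition) to get $L/\Q$ with group $P$ ramified only at rational primes totally split in the Galois closure of $F$, and then sets $E^{+}=L.F^{+}$; the splitting condition on the ramified primes forces $L\cap(\text{Galois closure of }F)$ to be everywhere unramified over $\Q$, hence trivial, so the group does not collapse under base change. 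Your plan to run the induction directly over $F^{+}$ is not something you can get by ``citing Shafarevich'': the literature gives proper solvability of the embedding problems but not the ramification control you need, which is why the paper itself remarks (in the very next construction) that it cannot simply invoke Shafarevich when precise local control is required.

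Second, your claim that imposing the extra condition ``$q$ splits in $F$'' is just ``compatibility of finitely many Frobenius conditions at disjoint sets of primes'' is the step that actually has to be checked, and it is not automatic: the new condition is a \v{C}ebotarev condition in the Galois closure of $F$, while the Scholz conditions are \v{C}ebotarev conditions in $L(\zeta_{p^n},\{l^{1/p}\})$, and these two fields could a priori intersect nontrivially, making the conditions contradictory. The paper's fix is built into the induction hypothesis: because all \emph{previously} ramified primes $l$ were chosen split in the Galois closure of $F$, that Galois closure meets $L(\zeta_{p^n},\{l^{1/p}\})$ only inside $\Q(\zeta_{p^n})$, where the condition $q\equiv 1 \bmod p^n$ already forces the right splitting; hence the conditions are simultaneously satisfiable. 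Your proposal asserts this compatibility without the inductive bookkeeping that makes it true, so as written it has a gap at exactly the point the lemma is designed to address; the architecture is salvageable, but you must either reproduce this field-disjointness argument or adopt the paper's descent-to-$\Q$ formulation where it is cleanest.
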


\begin{proof} The Scholz--Reichardt theorem~(\cite[Thm 2.1.1]{SerreNotes}) guarantees the existence of a number field~$L$ with~$\Gal(L/\Q) = P$, and, because~$|P|$ is odd,
such an extension will automatically be totally real. It suffices to show that we can construct such an extension ramified only at primes which are
totally split in~$F$ (equivalently, in the Galois closure of~$F$).
This forces the intersection of~$L$ with the Galois closure of~$F$ to be unramified everywhere over~$\Q$ and hence trivial, and thus~$E^{+} = L.F^{+}$
will have Galois group~$P$ over~$F$ and produce the desired extension.
The result follows immediately by induction and from the following lemma,
which is extremely close to~\cite[Thm~2.1.3]{SerreNotes}:
\begin{sublemma}Let~$\widetilde{A} \rightarrow A$ be a central extension of a finite group~$A$ by~$\Z/p \Z$, and assume that the exponent of~$\widetilde{A}$ divides~$p^n$.
Let~$L/\Q$ be Galois with Galois group~$A$, and assume that every prime~$l$ which ramifies in~$L$ has the following properties:
\begin{enumerate}
\item $l \equiv 1 \mod p^n$.
\item \label{case:two}   The inertia group(s) of~$l$ in~$A$ coincides with the decomposition group(s) at~$l$.
\item $l$ splits completely in~$F$.
\end{enumerate}
Then there exists an extension~$\widetilde{L}/L$ which is Galois over~$\Q$ with Galois group $\widetilde{A}$ and such that the primes~$l$ which ramify in~$\widetilde{L}$ satisfy the same conditions as above.
\end{sublemma}
The only difference between
this statement and Theorem~2.1.3 of~\cite{SerreNotes} is the extra requirement that the primes~$l$
splits completely in~$F$. There are two inductive steps in the proof of Theorem~2.1.3 of~\cite{SerreNotes}, and we indicate the required argument to show that the new auxiliary prime~$q$
may be chosen to split completely in~$F$.

Suppose first that~$\widetilde{A}=A\times\Z/p\Z$ is a split extension. Pick any prime~$q \equiv 1 \mod p^n$ which is totally split in the Galois closure of $F$ and  also totally split
 in the field~$L(\zeta_{p^n},\{l^{1/p}\}_{l \in \mathrm{Ram}(L/\Q)})$ given
by adjoining the~$p$th roots of primes~$l$ which ramify in~$L$. Then take $\widetilde{L}$ to be the composite of $L$ and the sub-extension of $\Gal(\Q(\zeta_q)/\Q)$ with Galois group $\Z/p\Z$.

Now suppose that~$\widetilde{A}$ is a non-split extension. The argument in~\cite{SerreNotes} proceeds by first finding an extension~$\widetilde{L}$ and then modifying~$\widetilde{L}$
so that it is ramified at the same places as~$L$. Hence the ramified primes~$l$ automatically satisfy the required splitting condition in~$F$.
 The final step is to modify the field further so that it has property~(\ref{case:two}).  This is achieved by choosing an auxiliary prime~$q \equiv 1 \mod p$ along with a character~$\chi: (\Z/q\Z)^{\times} \rightarrow \Z/p \Z$ satisfying the following properties:
\begin{enumerate}
\item The prime~$q \equiv 1 \mod p^n$.
\item For every prime~$l$ which ramifies in~$L$,  there is an equality $\chi(l) = c_l$ where~$c_l$ is determined from the extension~$L$.
\item The prime~$q$ splits completely in~$L$.
\item The prime~$q$ splits completely in~$F$.
\end{enumerate}
Only the last condition is new.
The first three conditions are \v{C}ebotarev conditions in the field~$L(\zeta_{p^n}, \{l^{1/p}\}_{l \in \mathrm{Ram}(L/\Q)})$, whereas the fourth condition is a \v{C}ebotarev condition in the
Galois closure of~$F$. By construction, the primes~$l$ are totally split in the Galois closure of~$F$. Hence the intersection of the
Galois closure of~$F$ with~$L(\zeta_{p^n}, \{l^{1/p}\}_{l})$ must be contained inside~$\Q(\zeta_{p^n})$. Since the first  condition implies that~$q$
splits completely in~$\Q(\zeta_{p^n})$, there is no obstruction to finding such primes~$q$  satisfying all
four conditions using the \v{C}ebotarev density theorem provided there is no obstruction without the last hypothesis.
But this is exactly what follows from the proof of Theorem~2.1.3 of~\cite{SerreNotes}.
\end{proof}

As in Lemma \ref{Lconj}, let ${}^L P$ be the image of $P \times \Z/2\Z$ in $W$; explicitly, it is the product $P \times \langle (w_0, \tau)\rangle$ inside $W$. Our reason for working with the group $P$ is the combination of the following two properties:

\begin{lemma}  \leavevmode \label{lemma:split}
\begin{enumerate}
\item The restriction of the extension
\[
 1 \to T(\Z) \to N_G(T)(\Z) \rtimes \Out(G) \xrightarrow{\pi} W_G \rtimes \Out(G) \to 1
\]
to ${}^L P$ splits.
\item  \label{splittransitive} $P$ acts transitively on the set $\Lambda_{\mr{min}}$ of weights of the minuscule representation $r_{\mr{min}}$.
\end{enumerate}
\end{lemma}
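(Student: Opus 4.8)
The plan is as follows.

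\smallskip

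\emph{Part (2)} is a counting argument. The group $W_G = W(E_6)$ has order $2^7\cdot 3^4\cdot 5$ and acts transitively on the $27$-element set $\Lambda_{\mr{min}}$, the stabilizer of a weight being the Weyl group of a Levi of type $D_5$, of order $2^7\cdot 3\cdot 5 = \#W_G/27$. Since $|P| = 3^4$, every $P$-orbit on $\Lambda_{\mr{min}}$ has size a power of $3$; and since the stabilizer of a weight in $W_G$ has $3$-part only $3$, the $P$-stabilizer of a weight has order dividing $3$, so every $P$-orbit has size at least $|P|/3 = 27$. As $\#\Lambda_{\mr{min}} = 27$, there is a single $P$-orbit, which is the assertion.

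\smallskip

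\emph{Part (1).} Set $A = T(\Z)$; since $G$ is split simply connected of rank $6$, $A$ is the reduction mod $2$ of the coroot lattice, an elementary abelian $2$-group of order $2^6$. It is a normal abelian subgroup of $E := \pi^{-1}({}^L P)$, and we must split $1 \to A \to E \to {}^L P \to 1$. By Gasch\"utz's splitting theorem it is enough to split $A$ off from a Sylow subgroup of $E$ at each prime; at odd primes this is automatic since $A$ is a $2$-group, and the restriction to $\pi^{-1}(P)$ splits by Schur--Zassenhaus as $\gcd(\#A,\#P) = \gcd(2^6,3^4) = 1$. So everything comes down to the prime $2$: it suffices to produce an element $\ww_0' \in N_G(T)(\Z)\rtimes\Out(G)$ of order $2$ lying over $(w_0,\tau)\in W$, for then $\pi^{-1}(\langle(w_0,\tau)\rangle) = A\rtimes\langle\ww_0'\rangle$ splits.

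\smallskip

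To construct $\ww_0'$, let $\ww_0 \in N_G(T)(\Z)$ be the Tits lift of $w_0$ and put $\ww_0' = \ww_0\cdot\tau$, which visibly lies over $(w_0,\tau)$. Because the Tits section is equivariant for the pinned automorphism $\tau$ and $\tau$ fixes $w_0$ in $W$, we have ${}^{\tau}\ww_0 = \ww_0$, hence $(\ww_0')^2 = \ww_0\cdot {}^{\tau}\ww_0 = \ww_0^2 = (2\rho^\vee)(-1) \in A$. Now $(2\rho^\vee)(-1)$ acts trivially on $\t = \mr{Lie}(T)$ and on each root space $\g_\alpha$ by $(-1)^{\langle\alpha,2\rho^\vee\rangle} = (-1)^{2\,\mr{ht}(\alpha)} = 1$, so it is central in $G$; but $G$ is the simply connected $E_6$, whose center $\mu_3$ has no $2$-torsion, so $(\ww_0')^2 = 1$. (Equivalently, $(2\rho^\vee)(-1)=1$ records the classical fact that $\rho$ lies in the $E_6$ root lattice.) This gives the desired involution and completes Part (1).

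\smallskip

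The only non-formal ingredient is this last construction, and it works precisely because the center of the simply connected $E_6$ has \emph{odd} order, so that no $2$-primary obstruction prevents the split Cartan involution from being realized by an involution inside $N_G(T)(\Z)\rtimes\Out(G)$; for a simply connected group whose center had even order this point would need a separate check.
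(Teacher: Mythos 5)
Your proof is correct. Part (2) is the same counting argument as the paper's. For part (1) the overall architecture also matches the paper's: both arguments reduce the splitting over ${}^L P = P \times \langle (w_0,\tau)\rangle$ to the existence of an order-two lift of $(w_0,\tau)$ in $N_G(T)(\Z)\rtimes\Out(G)$, using that $T(\Z)\cong(\Z/2)^6$ is $2$-torsion while $P$ is a $3$-group. You carry out the reduction with Gasch\"utz's theorem where the paper uses the Hochschild--Serre spectral sequence (restriction to the $2$-Sylow $\langle(w_0,\tau)\rangle$ is injective on $H^2$ valued in a $2$-torsion module) --- these are interchangeable. The one genuine difference is that where the paper simply cites \cite[Lemma 3.1]{adams-he} for the order-two lift, you construct it: $\ww_0'=\ww_0\cdot\tau$ with $\ww_0$ the Tits lift, using $\tau$-equivariance of the Tits section, the identity $\ww_0^2=(2\rho^\vee)(-1)$, and the fact that this element is central and hence trivial because $Z_G\cong\mu_3$ has no $2$-torsion (equivalently $\rho^\vee$ lies in the coroot lattice, as the paper itself notes elsewhere). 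This makes the lemma self-contained at the cost of invoking the standard properties of the Tits section; it also correctly isolates, as the paper's parenthetical does, that the odd order of $Z_{E_6}$ is what makes the argument work.
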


\begin{proof}
Since $P$ is a 3-group while $T(\Z)$ is an $\F_2$-module, the Hochschild--Serre spectral sequence 
\[
H^i(P, H^j(\langle (w_0, \tau)\rangle, T(\Z))) \implies H^{i+j}({}^L P, T(\Z))
\]
degenerates at the $E_2$-page, and restriction induces an isomorphism 
$$H^2({}^L P, T(\Z)) \xrightarrow{\sim} H^2(\langle (w_0, \tau)\rangle, T(\Z))^P.$$
 The image of our extension under this restriction isomorphism is trivial, because the element $(w_0, \tau)$ lifts to an order two element of $N(T)(\Z) \rtimes \Out(G)$ (see for instance \cite[Lemma 3.1]{adams-he}, noting that $Z_G$ has order 3).

For the second part of the lemma, note that $W_G$ acts transitively on $\Lambda_{\mr{min}}$, so the 3-part of the stabilizer of any element $\lambda \in \Lambda_{\mr{min}}$ has order 3 (recall $|W_G|= 2^7\cdot 3^4\cdot 5$ and $|\Lambda_{\mr{min}}|= 3^3$). It follows that the orbit of $P$ on $\lambda$ has order at least $|P|/3=3^3$, and thus (equality holds and) $P$ acts transitively. (More generally, a finite group~$G$ acts transitively
on a set~$X$ of~$p$-power order if and only if a~$p$-Sylow subgroup~$P$ acts transitively on~$X$.)
\end{proof} 

The work of Shafarevich on the inverse Galois problem for solvable groups implies that every split embedding problem with nilpotent kernel has a proper solution; we need some precise local control so will not be able to invoke this theorem, and unfortunately the following construction, guided by the demands of the local deformation theory as in \cite{Patrikis} and of automorphy lifting as in \cite{BLGGT}, is somewhat technical. 

Applying Lemma~\ref{lemma:split}, let us fix a splitting of the extension
\[
 1 \to T(\Z) \to \pi^{-1}({}^L P) \xrightarrow{\pi} {}^L P \to 1
\]
and write $s \colon \Gal(E/F^+) \to \pi^{-1}({}^L P)$ for the resulting lift. This representation is not yet suitable for potential automorphy theorems, so we modify it in the following Proposition. 

We first establish some notation. For a fixed prime $p$, we write $\kapppa$ for the $p$-adic cyclotomic character and $\bar{\kapppa}$ for its mod $p$ reduction. For a prime ${\elll}$ exactly dividing $p-1$, let $\pr({\elll})$ be the projection from ${\F}_p^{\times}$ onto the ${\elll}$-torsion subgroup ${\F}_p^{\times}[{\elll}]$ restricting to the identity on ${\F}_p^{\times}[{\elll}]$. Finally, let $\bar{\kapppa}[{\elll}]= \pr({\elll}) \circ \bar{\kapppa} \colon \Gammma_{F^+} \to {\F}_p^{\times}[{\elll}]$. For any homomorphism $\bar{\rho} \colon G_{F^+} \to G(\F_p)$ and any place $v$ of $F^+$, let $\bar{\rho}_{v} := \bar{\rho}|_{G_{F^+_v}}$. Finally, for any homomorphism of groups $\rho \colon \Gamma \to \Gamma'$, and any $\rho(\Gamma)$-module $M$, let $\rho(M)$ denote $M$ regarded as a $\Gamma$-module.  

\begin{prop}\label{seed}
Consider pairs of primes~$(\elll,p)$ such that:
\begin{itemize}
\item All primes above~$\elll$ split in~$F/F^{+}$.
\item $p$ splits in~$E/\Q$ and~$p-1$ is divisible by~$\elll$ but not by~$\elll^2$.
\end{itemize}
Then there exist infinitely many primes~$\elll$ such that there exist infinitely many  pairs~$(\elll,p)$
  such that there exists a homomorphism
\[
 \bar{\rho} \colon \Gammma_{F^+} \to T({\F}_p)[{\elll}] \cdot \pi^{-1}({}^L P) \subset N_G(T)({\F}_p) \rtimes \Out(G)
\]
lifting our fixed identification $\Gal(E/F^+)= {}^L P$ and satisfying the following:
\begin{enumerate}
 \item The restriction $\rhobar|_{\Gammma_{F}}$ is ramified only at places split in $F/F^+$. 
 \item For any choice of complex conjugation $c \in \Gammma_{F^+}$, $\rhobar(c)$ is a split Cartan involution of $G$, i.e. $\mathrm{dim}(\mathfrak{g}^{\mathrm{Ad}(\rhobar(c))=1})= \dim(G/B) = 36$. 
 \item For all places $v \vert p$ of $F^+$, fix any choice of integers $n_{v, \alpha}$ indexed by simple roots $\alpha \in \Delta= \Delta(G, B, T)$. Then $\rhobar|_{\Gammma_{F^+_v}}$ is equal to 
\[ 
\prod_{\alpha \in \Delta} \alpha^\vee \circ \bar{\kapppa}[{\elll}]^{n_{v, \alpha}}.  
\]
 \item Let $\rho_G^\vee$ denote the half-sum of the positive coroots of $(G, B, T)$; it lies in $X_{\bullet}(T)$ since $\#Z_G=3$. There is a set~$\TT$ 
  of two primes $q$ split in $E/\Q$ and of order ${\elll}$ modulo $p$ such that for some place $v \vert q$ of $F^+$, $\rhobar|_{\Gammma_{F^+_v}}$ is unramified with Frobenius mapping to $\rho_G^\vee(q)$ (which  lands
  inside the group~$T(\F_p)[l]$ because $q^l \equiv 1 \pmod p$). 
\end{enumerate}
Moreover, in addition to satisfying the above conditions, we can choose $p> 56= 2\cdot(27+1)$, ${\elll} > h_G = 12$ (the Coxeter number of $G$), 
 $\{n_{v, \alpha}\}$, and $\rhobar$ such that
\begin{enumerate}
\item For all $v \vert p$, the composite $r_{\mathrm{min}} \circ \rhobar|_{\Gammma_{F^+_v}}$ is a direct sum of distinct powers of $\bar{\kapppa}[{\elll}]$. 
\item For all $v \vert p$ and for any Borel subgroup $B$ (with Lie algebra $\mathfrak{b}$) containing $T$, the cohomology groups $H^0(\Gammma_{F^+_v}, \rhobar_v(\mathfrak{g}/\mathfrak{b}))$ and $H^0(\Gammma_{F^+_v}, \rhobar_v(\mathfrak{g}/\mathfrak{b})(1))$ both vanish. Moreover, for any Borel subgroup $B_{27}$ containing the maximal torus of $\mathrm{SL}_{27}$ that stabilizes the weight spaces of $r_{\mathrm{min}}$, $H^0(\Gammma_{F^+_v}, (r_{\mathrm{min}}\circ\rhobar_v)(\mathfrak{sl}_{27}/\mathfrak{b}_{27}))=0$ and $H^0(\Gammma_{F^+_v}, (r_{\mathrm{min}}\circ\rhobar_v)(\mathfrak{sl}_{27}/\mathfrak{b}_{27})(1))=0$.
\item The composite $r_{\mathrm{min}}\circ \rhobar|_{\Gammma_{F(\zeta_p)}}$ is absolutely irreducible.
\end{enumerate}
\end{prop}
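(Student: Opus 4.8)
The plan is to build $\rhobar$ as a twist of the fixed lift $s \colon \Gal(E/F^+) \to \pi^{-1}({}^L P)$ by a suitable ramified character, where the twisting character is valued in the $\elll$-torsion $T(\F_p)[\elll]$ of the torus. First I would fix the auxiliary data: choose $\elll > h_G = 12$ so that the genericity hypotheses on the weights can be met, and (following a \v{C}ebotarev argument) choose infinitely many primes $p$ split in $E/\Q$ with $\elll \| p-1$. The representation $\rhobar$ will have the shape $\rhobar = \chi \cdot s$, where $\chi \colon \Gamma_{F^+} \to T(\F_p)[\elll]$ is a homomorphism to be constructed by a global Galois cohomology (Poitou--Tate / relative deformation) argument with prescribed local behavior: at $v \mid p$ it should be $\prod_{\alpha \in \Delta} \alpha^\vee \circ \bar{\kapppa}[\elll]^{n_{v,\alpha}}$ on $s$-invariants, hence cutting out the local conditions (3) and (1); at the two auxiliary primes $q \in \TT$ (chosen split in $E/\Q$ and of order $\elll$ mod $p$) it should be unramified with Frobenius $\rho_G^\vee(q)$, giving (4); and elsewhere it should be unramified or ramified only at primes split in $F/F^+$, ensuring (1) of the first list. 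The constraint (2) on complex conjugation --- that $\rhobar(c)$ be a split Cartan involution, equivalently $\dim \g^{\mathrm{Ad}(\rhobar(c))=1} = 36$ --- follows from Lemma~\ref{Lconj} and Lemma~\ref{lemma:split}: $s(c)$ already maps to $(w_0, \tau)$, which acts on $T$ (hence on $\g$) as $-w_0 \circ w_0 = {}$ the opposition involution composed appropriately so that its $+1$-eigenspace on $\g$ has the dimension of $G/B$; twisting by a character that is compatible with the sign of $c$ preserves this.

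The heart of the argument is the simultaneous realizability of all the local conditions by a single global character $\chi$. I would set this up as an embedding problem / deformation problem: the obstruction to lifting lies in a Selmer-type group $H^1_{\mathcal{L}}(\Gamma_{F^+}, T(\F_p)[\elll])$ with the local conditions $\mathcal{L}$ imposed above, and the strategy is to add the two auxiliary "trivializing" primes $\TT$ precisely to kill the dual Selmer group, in the style of Ramakrishna/Taylor and as used in~\cite{Patrikis}. One needs to check the local conditions are each nonempty and of the expected dimension; this is where the hypothesis $\elll \| p-1$ (so $\F_p^\times[\elll] \cong \Z/\elll$) and the explicit coroot combinations enter, since $\rho_G^\vee(q)$ and $\prod \alpha^\vee(\cdot)$ genuinely land in $T(\F_p)[\elll]$ because $q^\elll \equiv 1$ and because $\bar{\kapppa}[\elll]$ is $\elll$-torsion. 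The existence of infinitely many good pairs $(\elll, p)$, and within each such the infinitude of $\TT$, is a further \v{C}ebotarev count once the Selmer-theoretic constraints are linear conditions over $\F_\elll$.

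For the "moreover" conditions (1)--(3) of the second list, I would argue as follows. Condition (1) --- $r_{\mathrm{min}} \circ \rhobar|_{\Gamma_{F^+_v}}$ is a sum of \emph{distinct} powers of $\bar{\kapppa}[\elll]$ --- reduces to a combinatorial statement about the weights $\Lambda_{\mr{min}}$ paired against the chosen cocharacter $\mu_v = \sum_\alpha \alpha^\vee n_{v,\alpha}$: since $r_{\mathrm{min}}$ is minuscule, its weights are a single Weyl orbit and, for $n_{v,\alpha}$ chosen sufficiently generic (e.g. spread out modulo $\elll$, which is possible as $\elll > h_G$), the $27$ pairings $\langle \lambda, \mu_v \rangle$ are pairwise distinct mod $\elll$, so the $27$ characters $\bar{\kapppa}[\elll]^{\langle \lambda, \mu_v\rangle}$ are distinct. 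Condition (2) --- vanishing of $H^0(\Gamma_{F^+_v}, \rhobar_v(\g/\mathfrak{b}))$ and of the Tate twist, and the $\mathfrak{sl}_{27}$ analogues --- is again combinatorial: $\g/\mathfrak{b}$ has $T$-weights the negative roots, and one needs that no negative root $\beta$ satisfies $\langle \beta, \mu_v\rangle \equiv 0$ or $\equiv 1$ mod $\elll$ (and similarly, using condition (1), that no difference $\langle \lambda - \lambda', \mu_v\rangle \equiv 0, 1$ for $\lambda \neq \lambda' \in \Lambda_{\mr{min}}$); this is an open/generic condition on $\{n_{v,\alpha}\}$, satisfiable because $\elll$ exceeds the Coxeter number $h_G$ so that all root-pairings have absolute value $< \elll$. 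Condition (3) --- absolute irreducibility of $r_{\mathrm{min}} \circ \rhobar|_{\Gamma_{F(\zeta_p)}}$ --- is the most structural point: here one uses that $\rhobar$ restricted to $\Gal(E/F^+)$ is (up to the central character) the map to $P \subset W_G$, that $P$ acts transitively on $\Lambda_{\mr{min}}$ by Lemma~\ref{lemma:split}(\ref{splittransitive}), and that the twisting character has been arranged to distinguish the weight lines $p$-adically; restricting to $F(\zeta_p)$ does not shrink the image of $\Gal(E/F^+)$ (as $E/F^+$ is ramified only at primes split in $F$, disjoint from the cyclotomic direction, after possibly enlarging $\TT$ or adjusting $p$), so the $\Gamma_{F(\zeta_p)}$-representation is a sum of $27$ characters permuted transitively by a quotient acting as $P$ on $\Lambda_{\mr{min}}$, forcing irreducibility.

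\textbf{Main obstacle.} I expect the principal difficulty to be the global step: producing a single character $\chi$ (equivalently, solving the embedding problem) that simultaneously realizes the rigid local conditions at $p$, the prescribed Frobenii at the two auxiliary primes, \emph{and} keeps ramification of $\rhobar|_{\Gamma_F}$ confined to primes split in $F/F^+$ --- all while the number of auxiliary primes is pinned at exactly two (forced by the eventual application of the automorphy lifting machinery, which is sensitive to the Selmer rank). Balancing the Selmer and dual-Selmer computations so that two well-chosen primes suffice, and verifying that the many \v{C}ebotarev conditions (splitting in $E$, order $\elll$ mod $p$, prescribed splitting in the auxiliary Kummer-type extensions) are mutually compatible and leave infinitely many choices, is the technically delicate core; by contrast the combinatorial genericity conditions on $\{n_{v,\alpha}\}$ and the irreducibility claim are comparatively soft once $\elll > h_G$ is in force.
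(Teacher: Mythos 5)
Your overall architecture (twist the splitting $s$ by a $T(\F_p)[\elll]$-valued class with prescribed local restrictions, then choose the $n_{v,\alpha}$ generically for the ``moreover'' conditions) matches the paper, but the mechanism you propose for the global step is not the right one, and you have flagged a non-problem as the main obstacle while leaving a real gap elsewhere. The paper does \emph{not} run a Ramakrishna-style Selmer/dual-Selmer annihilation here, and the two primes of $\TT$ are \emph{not} chosen to kill a dual Selmer group. Since local conditions are imposed only at a finite set $T$ of places (and nothing is demanded at the remaining places of $\Sigma$), the relevant statement is simply that the restriction map $H^1(\Gammma_{F^+,\Sigma}, W_i) \to \bigoplus_{v\in T} H^1(\Gammma_{F^+_v}, W_i)$ is surjective for each irreducible $\F_{\elll}[P]$-constituent $W_i$ of $T(\F_p)[\elll]$ (\cite[Thm 9.2.3(v)]{Neukirch}); one then takes any global class $\phi$ with the listed local restrictions and sets $\rhobar=\phi\cdot s$. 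No balancing of Selmer ranks is needed, and there is no constraint forcing ``exactly two'' auxiliary primes at this stage: $\TT$ has two elements only so that, later, for every prime $\lambda$ of the coefficient field at least one $q\in\TT$ has residue characteristic different from $\lambda$ (this is used in Lemma~\ref{lemma:stuff}, not in Proposition~\ref{seed}). The Selmer/dual-Selmer bookkeeping you describe belongs to the deformation-theoretic Lemma~\ref{Gdef} in the next section.

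The genuine gap is in your proof of absolute irreducibility of $r_{\mathrm{min}}\circ\rhobar|_{\Gammma_{F(\zeta_p)}}$. Transitivity of $P$ on $\Lambda_{\mr{min}}$ only shows that a nonzero submodule has nonzero projection to \emph{every} weight space; to conclude it contains a full weight line you need a group element of $T(\F_p)$ in the image of $\Gammma_{F(\zeta_p)}$ acting by $27$ pairwise distinct characters on the weight spaces. You assert that ``the twisting character has been arranged to distinguish the weight lines'' but never arrange it: the paper does this by adding to $T$ an auxiliary place $w$ above a rational prime splitting completely in $E(\zeta_p)/\Q$ and prescribing the unramified class $\mathrm{Fr}_w\mapsto t$ with $\lambda(t)$, $\lambda\in\Lambda_{\mr{min}}$, all distinct (possible for $\elll$ large). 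Without this extra local condition the irreducibility claim does not follow. Two smaller corrections: the vanishing of $H^0(\Gammma_{F^+_v},\rhobar_v(\g/\mathfrak{b})(1))$ is a Tate twist by the full character $\bar{\kapppa}$, not by $\bar{\kapppa}[{\elll}]$, so it follows from the fact that $\bar{\kapppa}$ has order divisible by a prime other than $\elll$ while every power of $\bar{\kapppa}[{\elll}]$ has order $1$ or $\elll$ --- your condition ``$\langle\beta,\mu_v\rangle\not\equiv 1 \bmod \elll$'' is neither needed nor the right condition. And for complex conjugation, the clean statement is that \emph{any} order-two element of $N_G(T)(\F_p)\rtimes\Out(G)$ lifting $(w_0,\tau)$ acts by $-1$ on $\mathrm{Lie}(T)$ and swaps $\g_\alpha$ with $\g_{-\alpha}$, giving fixed space of dimension exactly $\dim(G/B)$; since $s(c)$ acts on $T(\F_p)[\elll]$ by $-1$, the twisted element $\phi(c)s(c)$ still has order two and the same property.
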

\begin{proof}
 For now let ${\elll}$ be any odd prime such that the places of $F^+$ above ${\elll}$ are all split in $F/F^+$, and $E$ and $\Q(\mu_{{\elll}^2})$ are linearly disjoint over $\Q$ (eg, take ${\elll}$ split in $E/\Q$); later in the argument we will require ${\elll}$ to be larger than some absolute bound depending only on the group $G$. Let $p$ be any prime split in $E/\Q$ such that ${\elll}$, but not ${\elll}^2$, divides $p-1$ (such $p$ exist by 
 \v{C}ebotarev). 
 We will modify the original lift $s \colon \Gal(E/F^+) \to \pi^{-1}({}^L P)$ by an element of $H^1(\Gammma_{F^+}, T({\F}_p[{\elll}]))$ so as to satisfy the local conditions (here $\Gammma_{F^+}$ continues to act via the quotient $\Gal(E/F^+)$; note that $T({\F}_p)[{\elll}]$ is $\tau$-stable). Let $T({\F}_p)[{\elll}]= \oplus W_i$ be the decomposition into irreducible ${\F}_{{\elll}}[P]$-modules. Recall that $(w_0, c) \in {}^L P$ acts on $T$ by $-1$, so this is also a decomposition as ${}^L P$-module, and the splitting field $F^+(W_i)$ of the $\Gal(E/F^+)$-module $W_i$ contains $F$ for each $i$ (since ${\elll} \neq 2$). Let $\Sigma$ be the set of places of $F^+$ that are either split or ramified in $F/F^+$ (implicitly including the infinite places in the former condition); in particular, the action of $\Gammma_{F^+}$ on $T({\F}_p)[{\elll}]$ factors through $\Gammma_{F^+, \Sigma}$. For any finite subset $T$ of $\Sigma$, \cite[Theorem 9.2.3(v)]{Neukirch} implies that the restriction map
\[
 H^1(\Gammma_{F^+, \Sigma}, W_i) \to \bigoplus_T H^1(\Gammma_{F^+_v}, W_i)
\]
is surjective for all $i$. Assembling the different $i$ (with a common set $T$), the restriction map 
\[
 H^1(\Gammma_{F^+, \Sigma}, T({\F}_p)[{\elll}]) \to \bigoplus_T H^1(\Gammma_{F^+_v}, T({\F}_p)[{\elll}])
\]
is also surjective. We apply this observation to the following set $T$ and the following local cohomology classes: let $T$ be the union of the  following sets of places of $F^+$:
\begin{itemize}
 \item places which are ramified in $F/F^+$;
 \item places dividing~$p$;
 \item places above an auxiliary rational prime $q \in \TT$ that is split in $E/\Q$ and has order ${\elll}$ modulo $p$; (A positive density of such $q$ exist since $E$ is linearly disjoint from $\Q(\mu_p)$ over $\Q$, by comparing ramification at $p$.)
 \item an auxiliary place $w$ lying above a rational prime $r$ that splits completely in $E(\zeta_p)/\Q$.
\end{itemize}
Consider the following local classes in $H^1(\Gammma_{F^+_v}, T({\F}_p)[{\elll}])$ for $v \in T$:
\begin{itemize}
 \item trivial at places which ramify in $F/F^+$;
 \item the prescribed homomorphism $\prod_{\alpha \in \Delta} \alpha^\vee \circ \bar{\kapppa}[{\elll}]^{n_{v, \alpha}}$ for $v \vert p$,
 \item the unramified homomorphism $\mathrm{Fr}_v \mapsto \rho_G^\vee(q)$ for $v \vert q$ and~$q \in \TT$, where $\rho_G^\vee$ denotes the half-sum of the positive coroots of $G$, which is in fact a cocharacter of $G$ (we will only need this construction for one of the places above $q$).	
 \item the unramified homomorphism $\mathrm{Fr}_w \mapsto t$, where $t$ is any element of $T({\F}_p)[{\elll}]$ such that the values $\lambda(t)$ for $\lambda \in \Lambda_{\mr{min}}$ are all distinct (for ${\elll}$ sufficiently large, such $t$ exist). 
  \end{itemize}
Let $\phi \in H^1(\Gammma_{F^+, \Sigma}, T({\F}_p)[{\elll}])$ be a class with these local restrictions, and set $\rhobar= \phi \cdot s$. We claim the conclusions of the proposition hold for this $\rhobar$. The conditions at finite places are all evident from the construction, and the condition on complex conjugation is satisfied because any order two element 
$(\ww_0, \tau) \in N_G(T)({\F}_p)$ 
lifting $(w_0, \tau) \in W_G \rtimes \Out(G)$ gives a split Cartan involution of $\mathfrak{g}$: it acts by $-1$ on $\mathrm{Lie}(T)$, and it sends a root space $\mathfrak{g}_{\alpha}$ to $\mathfrak{g}_{-\alpha}$, so we get precisely $\dim(G/B)= \dim(\mathfrak{g}^{\mathrm{Ad}((\ww_0, \tau))=1})$.

For the second list of assertions, note that for $v \vert p$, $r_{\mathrm{min}}\circ \rhobar|_{\Gammma_{F^+_v}}$ is equal to
\[
\bigoplus_{\lambda \in \Lambda_{\mathrm{min}}} \bar{\kapppa}[{\elll}]^{\sum_{\alpha \in \Delta} n_{v, \alpha}\langle \lambda, \alpha^\vee \rangle},
\]
where $\Lambda_{\mathrm{min}}$ is the set of weights of $r_{\mathrm{min}}$. For varying $\lambda$, we want these exponents to be distinct modulo ${\elll}$.
We simply choose the $(n_{v, \alpha})_{\alpha}$ so that the exponents are distinct in $\Z$, and then any ${\elll}$ sufficiently large will do. To evaluate the cohomology groups appearing in the conclusion of the proposition, note that as a $\Gammma_{F^+_v}$-module, $\rhobar_v(\mathfrak{g}/\mathfrak{b})$ is a direct sum of characters of the form
\[
 \bar{\kapppa}[{\elll}]^{\sum_{\alpha \in \Delta} n_{v, \alpha} \langle \beta, \alpha^\vee \rangle},
\]
where $\beta$ is a negative root of $G$. Clearly the choice of ${\elll}$ can be modified if necessary to ensure that these exponents (which don't depend on $p$) are all integers between $1$ and ${\elll} -1$, and so regardless of how $p$ is chosen the group $H^0(\Gammma_{F^+_v}, \rhobar_v(\mathfrak{g}/\mathfrak{b}))$ will vanish. The vanishing of $H^0(\Gammma_{F^+_v}, \rhobar_v(\mathfrak{g}/\mathfrak{b})(1))$ is even more straightforward: the order of $\bar{\kapppa}$ is divisible by some prime other than ${\elll}$, whereas all powers of $\bar{\kapppa}[{\elll}]$ have order $1$ or ${\elll}$. We can similarly deduce the vanishing of $H^0(\Gammma_{F^+_v}, (r_{\mathrm{min}}\circ \rhobar_v)(\mathfrak{sl}_{27}/\mathfrak{b}_{27}))$ and $H^0(\Gammma_{F^+_v}, (r_{\mathrm{min}}\circ \rhobar_v)(\mathfrak{sl}_{27}/\mathfrak{b}_{27})(1))$, where $\mathfrak{b}_{27}$ is the Lie algebra of any Borel subgroup $B_{27} \subset \mathrm{SL}_{27}$ containing the maximal torus of $\mathrm{SL}_{27}$ characterized by the property that it stabilizes each of the weight spaces of $r_{\mathrm{min}}$. Then $(r_{\mathrm{min}}\circ\rhobar_v)(\mathfrak{sl}_{27}/\mathfrak{b}_{27})$ is a direct sum of characters
\[
 \bar{\kapppa}[{\elll}]^{\sum_{\alpha \in \Delta} n_{v, \alpha}\langle \lambda_1-\lambda_2, \alpha^\vee \rangle},
\]
for distinct weights $\lambda_1, \lambda_2 \in \Lambda_{\mathrm{min}}$. These exponents are by construction non-zero modulo ${\elll}$, so we win, and the same argument as for $\rhobar_v(\mathfrak{g}/\mathfrak{b})(1)$ applies to show $H^0(\Gammma_{F^+_v}, (r_{\mathrm{min}}\circ\rhobar_v)(\mathfrak{sl}_{27}/\mathfrak{b}_{27})(1))$ is zero as well. 

The condition at the auxiliary prime $w$ ensures absolute irreducibility of $r_{\mathrm{min}} \circ \rhobar|_{\Gammma_{F(\zeta_p)}}$: $P$ acts transitively on $\Lambda_{\mathrm{min}}$ by
Lemma~\ref{lemma:split}(\ref{splittransitive}), so any non-zero submodule of $r_{\mathrm{min}} \circ \rhobar|_{\Gammma_{F(\zeta_p)}}$ has non-zero projection to each weight space (recall that $F(\zeta_p)$ is linearly disjoint from $E$ over $F$); but for a place $w' \vert w$ of $F(\zeta_p)$, the image of $\Gammma_{F(\zeta_p)_{w'}}$ in $T({\F}_p)$ acts via distinct characters on the different weight spaces of $r_{\mathrm{min}}$.
\end{proof}

\section{Lifting Galois Representations}

Let $\rhobar \colon \Gammma_{F^+} \to N_G(T)({\F}_p) \rtimes \Out(G) \subset {}^L G({\F}_p)$ be a homomorphism constructed as in Proposition \ref{seed}. We would like to lift $\rhobar$ to a homomorphism 
\[
 \rho \colon \Gammma_{F^+} \to {}^L G(\Zbar_p)
\]
that belongs to a compatible system of representations, all having Zariski-dense image and appearing in the cohomology of an algebraic variety. To achieve Zariski-dense monodromy for a lift $\rho$, we follow the approach of \cite{Patrikis}: ensuring local Steinberg-type ramification at one auxiliary prime and sufficiently general Hodge--Tate cocharacter suffices. To produce the lift, and to put it in a compatible system, we compare deformation rings for $\rhobar$ and $r_{\mathrm{min}}\circ \rhobar \colon \Gammma_{F^+} \to \mathcal{G}_{27}({\F}_p)$. We control a suitable deformation ring for $r_{\mathrm{min}}\circ \rhobar$ using the method of Khare--Wintenberger and the automorphy lifting results of \cite{BLGGT}.

Let $S$ be a finite set of primes of $F^+$ which split in $F$ containing all those where $\rhobar$ is ramified (by the construction of $\rhobar$, the primes of $F^+$ at which $\rhobar$ is ramified split in $F$). We will enlarge the set $S$ as necessary. Let $F_S$ be the maximal extension (in $\overline{F}^+$) of $F$ unramified outside (places above) $S$, and set $\Gammma_S= \Gal(F_S/F^+)$. We will be deforming $\Gammma_S$-representations. For each $v \in S$, fix an extension~$\tilde{v}$ of~$v$ to~$F$, and fix a member of the $\Gammma_{F, S}$-conjugacy class of homomorphisms $\Gammma_{F_{\tilde{v}}} \to \Gammma_{F, S}$. Via the inclusion $\Gammma_{F, S} \subset \Gammma_S$, these choices specify what we mean by restricting $\rhobar$ (or its lifts) to $\Gammma_{F_{\tilde{v}}}$. We do not review in detail the mechanics of the deformation theory in this setting, but instead refer the reader to \cite[\S 9.2]{Patrikis} (for ${}^L G$) and \cite[\S 2]{CHT} (for $\mathcal{G}_{27}$). We now specify local deformation conditions to define two global deformation functors, one for $\rhobar$ and one for $r_{\mathrm{min}}\circ \rhobar$.

For $v \in S$, define the following local deformation conditions $\mathcal{P}_v$ on lifts of $\rhobar|_{\Gammma_{F_{\tilde{v}}}}$:
\begin{itemize}
 \item For $v$ above the auxiliary primes $q \in \TT$ (see Proposition \ref{seed}), we impose the Steinberg deformation condition as in \cite[\S 4.3]{Patrikis}, with respect to the Borel subgroup $B$ of $G$ specified by our based root datum. Note that by construction the order of $\bar{\kapppa} \colon \Gammma_{F_{\tilde{v}}} \to {\F}_p^\times$ is greater than $h_G-1$ (since ${\elll}>h_G$ is the order of $q$ modulo $p$).
 \item For $v \vert p$, we take an ordinary deformation condition as in \cite[\S 4.1]{Patrikis}. To be precise, fix the following lift $\chi_T$ of $\rhobar|_{I_{F_{\tilde{v}}}}$ to $T(\Z_p)$:
\begin{equation}\label{HTweights}
 \chi_T= \prod_{\alpha \in \Delta} \alpha^\vee \circ(\kapppa^{\tilde{n}_{v, \alpha}}\cdot \chi^{-n_{v, \alpha}}),
\end{equation}
where the $\tilde{n}_{v, \alpha}$ are sufficiently general positive (positive ensures, as in \cite[Lemma 4.8]{Patrikis}, that our characteristic zero lifts are de Rham) integers congruent to $n_{v, \alpha}$ modulo ${\elll}-1$, and $\chi$ is the Teichm\"{u}ller lift of $\bar{\kapppa} \cdot  \bar{\kapppa}[{\elll}]^{-1}$.
 \item For all other primes $v \in S$, the inertial image $\rhobar(I_{F_{\tilde{v}}})$ has order prime to $p$ (indeed, $\rhobar$ lands in the prime-to-$p$ group $N_G(T)({\F}_p) \rtimes \Out(G)$), and we take the minimal deformation condition of \cite[\S 4.4]{Patrikis}. 
\end{itemize}
\begin{lemma}\label{Gdef}
For all $v \in S$, let $\mathcal{P}_v$ be the local condition just defined. Let $\mathcal{P}= \{\mathcal{P}_v\}_{v \in S}$, and let $\mathrm{Lift}_{\rhobar}^{\mathcal{P}}$ be the associated global lifting functor associated to this collection of local conditions (see \cite[\S 9.2]{Patrikis}).
\begin{enumerate}
 \item
 For all $v \in S$ not above $p$, the local lifting ring associated to the condition $\mathcal{P}_v$ just defined is formally smooth, and the associated deformation functor has tangent space $L_v$ of dimension $\dim H^0(\Gammma_{F_{\tilde{v}}}, \rhobar(\mathfrak{g}))$. For $v \vert p$, the same holds, except the tangent space has dimension $\dim H^0(\Gammma_{F_{\tilde{v}}}, \rhobar(\mathfrak{g}))+ \dim (G/B)$.
 \item The associated deformation functor $\mathrm{Def}^{\mathcal{P}}_{\rhobar}$ is representable. Let $R^{\mathcal{P}}_{\rhobar}$ be the representing object. For some integer~$\delta$, $R^{\mathcal{P}}_{\rhobar}$ has a presentation as the quotient of a power series ring over~$\Z_p$ in~$\delta$ variables by an ideal generated by (at most)~$\delta$ relations.
\end{enumerate}
\end{lemma}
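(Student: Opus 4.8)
The plan is to prove assertion (1) locally, by quoting the analysis of the three types of local deformation condition in \cite[\S 4]{Patrikis}, and assertion (2) globally, by a Wiles-formula computation as in \cite[\S 9.2]{Patrikis}; the content of the latter is that the local conditions built into $\rhobar$ in Proposition~\ref{seed} were chosen precisely so that the formula comes out balanced.

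For (1) I would argue one place at a time. At $v\in S$ with $v\nmid p$ and $v$ not above a prime of $\TT$, the inertial image $\rhobar(I_{F_{\tilde{v}}})$ has order prime to $p$ (since $\rhobar$ takes values in $N_G(T)(\F_p)\rtimes\Out(G)$), so the minimal condition of \cite[\S 4.4]{Patrikis} applies: the lifting ring is formally smooth, and the deformation tangent space $L_v$ has dimension $\dim H^0(\Gammma_{F_{\tilde{v}}},\rhobar(\g))$ by the local Euler characteristic formula and local Tate duality for the Killing-self-dual module $\rhobar(\g)$. At $v$ above $q\in\TT$, the construction of $\rhobar$ guarantees that $\bar{\kapppa}|_{\Gammma_{F_{\tilde{v}}}}$ has order $\elll>h_G$, in particular $>h_G-1$, which is exactly the hypothesis required by \cite[\S 4.3]{Patrikis}: the Steinberg lifting ring relative to $B$ is formally smooth, with deformation tangent space again of dimension $\dim H^0(\Gammma_{F_{\tilde{v}}},\rhobar(\g))$. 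At $v\mid p$, since $p$ splits in $E/\Q$ we have $[F^+_v:\Q_p]=1$, and the integers $\tilde{n}_{v,\alpha}$ appearing in \eqref{HTweights} are positive and sufficiently general; the ordinary condition of \cite[\S 4.1]{Patrikis} attached to $\chi_T$ then produces a formally smooth lifting ring with deformation tangent space of dimension $\dim H^0(\Gammma_{F_{\tilde{v}}},\rhobar(\g))+\dim(G/B)$, the positivity of the $\tilde{n}_{v,\alpha}$ being what forces the characteristic-zero lifts to be de Rham (as in \cite[Lemma 4.8]{Patrikis}) and the extra $\dim(G/B)=36$ being the flag-variety dimension supplied by the choice of ordinary filtration.

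For (2): first, $\mathrm{Def}^{\mathcal{P}}_{\rhobar}$ is pro-representable once $H^0(\Gammma_S,\rhobar(\g))=0$, which is the relevant condition because $\mathfrak{e}_6$ is simple with trivial centre. I would deduce this, and the companion vanishing $H^0(\Gammma_S,\rhobar(\g)(1))=0$ needed below, from the absolute irreducibility of $r_{\mathrm{min}}\circ\rhobar|_{\Gammma_{F(\zeta_p)}}$ established in Proposition~\ref{seed}: the map $dr_{\mathrm{min}}$ embeds $\rhobar(\g)$ into $(r_{\mathrm{min}}\circ\rhobar)(\mathfrak{sl}_{27})$ as $\Gammma_{F^+}$-modules, while $H^0(\Gammma_{F(\zeta_p)},(r_{\mathrm{min}}\circ\rhobar)(\mathfrak{gl}_{27}))=\mathrm{End}_{\Gammma_{F(\zeta_p)}}(r_{\mathrm{min}}\circ\rhobar)=\overline{\F}_p\cdot\mathrm{Id}$ by Schur's lemma, and since no nonzero scalar is traceless ($p>56$ does not divide $27$), $H^0(\Gammma_S,\rhobar(\g))\subseteq H^0(\Gammma_{F(\zeta_p)},\rhobar(\g))=0$; likewise $H^0(\Gammma_S,\rhobar(\g)(1))\subseteq H^0(\Gammma_{F(\zeta_p)},\rhobar(\g)(1))=H^0(\Gammma_{F(\zeta_p)},\rhobar(\g))=0$ because $\bar{\kapppa}$ is trivial on $\Gammma_{F(\zeta_p)}$. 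Given representability, $R^{\mathcal{P}}_{\rhobar}$ is a quotient of $\Z_p[[x_1,\dots,x_\delta]]$ with $\delta:=\dim_{\F_p}H^1_{\mathcal{L}}(\Gammma_S,\rhobar(\g))$, the Selmer group cut out by the local conditions $L_v$ of part (1); and by the standard obstruction-theoretic argument together with Poitou--Tate duality (as in \cite[\S 9.2]{Patrikis}, using $\rhobar(\g)^{\ast}\cong\rhobar(\g)$ via the Killing form, which is nondegenerate mod $p$ as $p>56$) the defining ideal can be generated by at most $\dim_{\F_p}H^1_{\mathcal{L}^{\perp}}(\Gammma_S,\rhobar(\g)(1))$ elements. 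Thus it remains only to prove $\dim H^1_{\mathcal{L}}=\dim H^1_{\mathcal{L}^{\perp}}$, for which I would invoke Wiles's formula
\[
 \dim H^1_{\mathcal{L}}-\dim H^1_{\mathcal{L}^{\perp}}=\dim H^0(\Gammma_S,\rhobar(\g))-\dim H^0(\Gammma_S,\rhobar(\g)(1))+\sum_{v}\bigl(\dim L_v-\dim H^0(\Gammma_{F^+_v},\rhobar(\g))\bigr),
\]
with $v$ ranging over $S$ together with the real places of $F^+$, and compute the right-hand side term by term: the two global $H^0$-terms vanish by the previous discussion; for $v\in S$ with $v\nmid p$ the summand is $0$ by part (1); for each of the $[F^+:\Q]$ places $v\mid p$ (this many because $p$ splits completely in $F^+$) the summand is $+\dim(G/B)$ by part (1); and for each of the $[F^+:\Q]$ real places $v$ one has $L_v=0$ (as $p$ is odd, $H^1(\Gammma_{F^+_v},-)=0$) while $\dim H^0(\Gammma_{F^+_v},\rhobar(\g))=\dim\g^{\mathrm{Ad}(\rhobar(c))=1}=\dim(G/B)=36$ because $\rhobar(c)$ is a split Cartan involution (Proposition~\ref{seed}(2)), so the summand is $-\dim(G/B)$. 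The contributions at the places above $p$ and at the real places cancel, leaving $\dim H^1_{\mathcal{L}}=\dim H^1_{\mathcal{L}^{\perp}}=\delta$.

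I expect the heart of the matter to be precisely this last cancellation, and it is why Proposition~\ref{seed} insists both that $p$ split completely in $F^+$ --- so that the places above $p$ and the real places of $F^+$ are equinumerous --- and that $\rhobar(c)$ be a split Cartan involution --- so that $\dim\g^{\mathrm{Ad}(\rhobar(c))=1}$ equals the flag-variety dimension $\dim(G/B)$ supplied by the ordinary condition at each place above $p$. Everything else is a reference: \cite[\S 4]{Patrikis} for the local statements of part (1), and the deformation-theoretic formalism of \cite[\S 9.2]{Patrikis} and \cite{CHT} for part (2).
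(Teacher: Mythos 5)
Your proposal is correct and takes essentially the same route as the paper: the paper's proof simply cites \cite[\S 4.1, 4.3, 4.4]{Patrikis} for the local statements and \cite[Proposition 9.2]{Patrikis} for the global ones, listing as inputs exactly the three facts you verify (trivial centralizer of $\rhobar$ in $\mathfrak{g}$, split Cartan involution at complex conjugation, and the local tangent-space dimensions). Your write-up merely unpacks the Greenberg--Wiles cancellation between the places above $p$ and the real places that Proposition 9.2 of \cite{Patrikis} encapsulates, and it does so correctly.
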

\begin{proof}
The local claims follow from \cite[\S 4.1, 4.3, 4.4]{Patrikis}. The global claims follow from \cite[Proposition 9.2]{Patrikis}), using that:
\begin{itemize}
 \item the centralizer of $\rhobar$ in $\mathfrak{g}$ is trivial;
\item for all complex conjugations $c$, $\rhobar(c)$ is a split Cartan involution of $G$;
\item the local lifting rings have dimensions as computed in the first part of the lemma.
\end{itemize}
(We remark that the integer~$\delta$ is the common dimension of the Selmer and dual Selmer groups associated to the global deformation functor.) 
\end{proof}
Next we define an analogous deformation ring for $r_{\mathrm{min}}\circ \rhobar \colon \Gammma_S \to \mathcal{G}_{27}({\F}_p)$. Recall the character $\nu \colon \mathcal{G}_{27} \to \mathbf{G}_m$. The composition $\nu \circ (r_{\mathrm{min}}\circ \rhobar)$ is the non-trivial character of $\Gal(F/F^+)$, and we fix $\mu \colon \Gammma_{F^+} \to \Z_p^{\times}$ equal to its Teichm\"{u}ller lift (we will consider lifts with this fixed character). To define a global deformation problem in the sense of \cite[\S 1.5]{BLGGT} (see \cite{CHT} for more details), we must, for each $v \in S$, 
choose an irreducible component $\mathcal{C}_v$ of the (generic fiber) lifting ring (in the case $v$ not above $p$) $R^{\square}_{r_{\mathrm{min}} \circ \rhobar|_{\Gammma_{F_{\tilde{v}}}}}[1/p]$ or (in the case $v \vert p$) $\lim_K 
R^{\square}_{r_{\mathrm{min}} \circ \rhobar|_{\Gammma_{F_{\tilde{v}}}}, \{H\}, K-\mathrm{ss}}[1/p]$, where here we follow the notation of \cite{BLGGT}: $H$ is a collection (indexed by embeddings $F_{\tilde{v}} \to \Q_p$, but for us $F_{\tilde{v}}= \Q_p$) of multi-sets of Hodge numbers, $K$ varies over finite extensions of $F_{\tilde{v}}$, and the lifting ring in question is the one constructed by Kisin (\cite{Kisin}), whose characteristic zero points parametrize potentially semistable deformations, semistable over $K$, with the prescribed Hodge numbers; to be precise, it is the maximal reduced $p$-torsion-free quotient of $R^{\square}_{r_{\mathrm{min}} \circ \rhobar|_{\Gammma_{F_{\tilde{v}}}}}$ whose $\Qbar_p$-points satisfy these properties (see \cite[\S 1.4]{BLGGT} for an overview). In both cases $v \vert p$ and $v \nmid p$, we then associate the lifting ring $R^{\mathcal{C}_v}_{r_{\mathrm{min}} \circ \rhobar|_{\Gammma_{F_{\tilde{v}}}}}$ given by the maximal reduced $p$-torsion-free quotient of ($v \nmid p$) $R^{\square}_{r_{\mathrm{min}} \circ \rhobar|_{\Gammma_{F_{\tilde{v}}}}}$ or ($v \vert p$) $\lim_K 
R^{\square}_{r_{\mathrm{min}} \circ \rhobar|_{\Gammma_{F_{\tilde{v}}}}, \{H\}, K-\mathrm{ss}}$ that is, after inverting $p$, supported on the component $\mathcal{C}_v$. Namely, we take:
\begin{itemize}
 \item For $v$ above the auxiliary primes $q \in \TT$, recall that $\rhobar|_{\Gammma_{F_{\tilde{v}}}}$ is unramified with (arithmetic) Frobenius $\mathrm{Fr}_v$ mapping to $\rho_G^{\vee}(q)$. Let $\varphi \colon \mathrm{PGL}_2 \to G$ be the principal homomorphism associated to our fixed pinning of $G$, so $\rhobar(\mathrm{Fr}_v)$ equals $\varphi(\mathrm{diag}(q, 1))$. The composite $r_{\mathrm{min}} \circ \varphi$ decomposes as $\mathrm{S}^{16} \oplus \mathrm{S}^{8} \oplus \mathrm{S}^0$ (see \cite[\S 7]{gross}), where we write $S^i$ for the $i^{th}$ symmetric power of the standard representation of $\mathrm{SL}_2$. Consider the lift $\rho_{\tilde{v}}$ of $r_{\mathrm{min}} \circ \rhobar|_{\Gammma_{F_{\tilde{v}}}}$ given by $r_{16} \oplus r_8 \oplus r_0$ where $r_i$ is the tame unipotent representation on the $\mathrm{S}^i$ component given by the matrices
\[
\left(r_i(\mathrm{Fr}_v)\right)_{a, b}= 
\begin{cases}
 q^{i-2a+2} & \text{if $a=b$,} \\
0 &\text{if $a \neq b$,} 
\end{cases}
\]
and on a topological generator $\tau_v$ of tame inertia by
\[
 \left(r_i(\tau_v)\right)_{a, b}=
\begin{cases}
 p & \text{if $b=a+1$,} \\
0 & \text{if $b \neq a+1$.}
\end{cases}
\]
A quick calculation shows that, for all finite extensions $K/F_{\tilde{v}}$, we have an
 equality~$H^0(\Gammma_{K}, \mathrm{ad}(\rho_{\tilde{v}})(1))=0$, so $\rho_{\tilde{v}}$ is a robustly smooth point of $R^{\square}_{r_{\mathrm{min}} \circ \rhobar|_{G_{F_{\tilde{v}}}}}[1/p]$, in the sense of \cite[\S 1.3]{BLGGT}. In particular, $\rho_{\tilde{v}}$ lies on a unique irreducible component of $R^{\square}_{r_{\mathrm{min}} \circ \rhobar|_{\Gammma_{F_{\tilde{v}}}}}[1/p]$, and we take $\mathcal{C}_v$ to be this component. 
 \item For $v \vert p$, let the set $H$ of Hodge--Tate weights be 
\[
\{h_{\lambda}= \sum_{\alpha \in \Delta} \tilde{n}_{v, \alpha} \langle \lambda, \alpha^\vee \rangle \}_{\lambda \in \Lambda_{\mathrm{min}}},
\]
where $\Lambda_{\mathrm{min}}$ is the set of weights of $r_{\min}$ as before. Borel subgroups of $\mathrm{SL}_{27}$ containing the maximal torus $T_{27}$ (the unique torus stabilizing the weight spaces in $r_{\mathrm{min}}$) are in bijection with orderings of the set $\Lambda_{\mathrm{min}}$; let $B_{27}$ be the Borel defined by the ordering $\lambda > \lambda' \iff h_{\lambda} > h_{\lambda'}$. The ordinary deformation ring (again following the notation of \cite[\S 4.1]{Patrikis}) associated to the Borel $B_{27}$ and the lift $\chi_{T_{27}} \colon I_{F_{\tilde{v}}} \to T_{27}(\Z_p)$ of $r_{\mathrm{min}}\circ \rhobar|_{\Gammma_{F_{\tilde{v}}}}$ given by
\[
 \chi_{T_{27}}= r_{\mathrm{min}}\left(\prod_{\alpha \in \Delta} \alpha^\vee(\kapppa^{\tilde{n}_{v, \alpha}} \chi^{-n_{v, \alpha}})\right)
\]
is formally smooth and receives by the universal property a canonical surjection from $R^{\square}_{r_{\mathrm{min}}\circ \rhobar|_{\Gammma_{F_{\tilde{v}}}}, H, K-\mathrm{ss}}$ for $K= F_{\tilde{v}}(\chi)$ (recall that $\chi$ is the Teichm\"{u}ller lift of $\bar{\kapppa} \cdot (\bar{\kapppa}[{\elll}])^{-1}$), and after inverting $p$ it induces an isomorphism from a unique irreducible component of the source (the equality of dimensions follows from \cite{Kisin} and \cite[\S 4.1]{Patrikis}). We take $\mathcal{C}_v$ to be this component.
 \item For all other $v \in S$, we take the irreducible component of $R^{\square}_{r_{\mathrm{min}}\circ \rhobar|_{\Gammma_{F_{\tilde{v}}}}}$ parametrizing minimal deformations in the sense of \cite[\S 4.4]{Patrikis}; using the standard argument, this is a power series ring and induces a unique irreducible component $\mathcal{C}_v$ of $R^{\square}_{r_{\mathrm{min}}\circ \rhobar|_{\Gammma_{F_{\tilde{v}}}}}[1/p]$.
\end{itemize}
In each case, we write $R_{r_{\mathrm{min}}\circ \rhobar|_{\Gammma_{F_{\tilde{v}}}}}^{\mathcal{C}_v}$ for the associated local lifting ring. Recall (\cite[\S 1.3]{BLGGT}) that for $v$ not above $p$, $R_{r_{\mathrm{min}}\circ \rhobar|_{\Gammma_{F_{\tilde{v}}}}}^{\mathcal{C}_v}$ is the maximal quotient of $R^{\square}_{r_{\mathrm{min}}\circ \rhobar|_{\Gammma_{F_{\tilde{v}}}}}$ that is reduced, $p$-torsion-free, and after inverting $p$ is supported on the component $\mathcal{C}_v$. For $v \vert p$, $R^{\mathcal{C}_v}_{r_{\mathrm{min}}\circ \rhobar|_{\Gammma_{F_{\tilde{v}}}}}$ is in general constructed similarly, but for us it is simply the formally smooth ordinary deformation ring produced by \cite[\S 4.1]{Patrikis}.
\begin{lemma}\label{localliftmap}
 For all $v \in S$, the representation $r_{\mathrm{min}}$ induces a map $R_{r_{\mathrm{min}}\circ \rhobar|_{\Gammma_{F_{\tilde{v}}}}}^{\mathcal{C}_v} \to R_{\rhobar|_{\Gammma_{F_{\tilde{v}}}}}^{\mathcal{P}_v}$
\end{lemma}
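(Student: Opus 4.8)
The plan is to construct the map from functoriality of framed deformation rings, and then to verify place by place that it descends to the chosen quotients. Post-composition with $r_{\mathrm{min}}\colon{}^{L}G\to\mathcal{G}_{27}$ turns a lift of $\rhobar|_{\Gammma_{F_{\tilde{v}}}}$ into a lift of $r_{\mathrm{min}}\circ\rhobar|_{\Gammma_{F_{\tilde{v}}}}$; applied to the universal framed lift over $R^{\square}_{\rhobar|_{\Gammma_{F_{\tilde{v}}}}}$ this gives a ring homomorphism $R^{\square}_{r_{\mathrm{min}}\circ\rhobar|_{\Gammma_{F_{\tilde{v}}}}}\to R^{\square}_{\rhobar|_{\Gammma_{F_{\tilde{v}}}}}$, which after composing with the tautological surjection onto $R^{\mathcal{P}_v}_{\rhobar|_{\Gammma_{F_{\tilde{v}}}}}$ yields a map $g_v\colon R^{\square}_{r_{\mathrm{min}}\circ\rhobar|_{\Gammma_{F_{\tilde{v}}}}}\to R^{\mathcal{P}_v}_{\rhobar|_{\Gammma_{F_{\tilde{v}}}}}$. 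What is left is to show that $g_v$ factors through the quotient $R^{\mathcal{C}_v}_{r_{\mathrm{min}}\circ\rhobar|_{\Gammma_{F_{\tilde{v}}}}}$. I would first record two standing facts: by Lemma~\ref{Gdef}(1) the target $R^{\mathcal{P}_v}_{\rhobar|_{\Gammma_{F_{\tilde{v}}}}}$ is formally smooth over $\Z_p$, hence reduced, $\Z_p$-flat, and a domain, so $\mathrm{Spec}\,R^{\mathcal{P}_v}_{\rhobar|_{\Gammma_{F_{\tilde{v}}}}}[1/p]$ is irreducible; and every $v\in S$ splits in $F/F^+$, so $\rhobar|_{\Gammma_{F_{\tilde{v}}}}$ is valued in $G$ and $r_{\mathrm{min}}\circ\rhobar|_{\Gammma_{F_{\tilde{v}}}}$ in $\GL_{27}\times\GL_1$.

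For $v\mid p$ the desired factorization is formal. By \eqref{HTweights} one has $r_{\mathrm{min}}\circ\chi_T=\chi_{T_{27}}$, and $r_{\mathrm{min}}$ carries the Borel $B$ into the Borel $B_{27}$ used to cut out $\mathcal{C}_v$ (the latter being the one adapted to the ordering of $\Lambda_{\mathrm{min}}$ by the $h_\lambda$), so $r_{\mathrm{min}}$ takes a lift of $\rhobar|_{\Gammma_{F_{\tilde{v}}}}$ ordinary for $(B,\chi_T)$ to one ordinary for $(B_{27},\chi_{T_{27}})$. Since $R^{\mathcal{C}_v}_{r_{\mathrm{min}}\circ\rhobar|_{\Gammma_{F_{\tilde{v}}}}}$ is by construction exactly the latter (formally smooth) ordinary deformation ring, the functoriality of the construction of \cite[\S 4.1]{Patrikis} exhibits $g_v$ directly as a map out of $R^{\mathcal{C}_v}_{r_{\mathrm{min}}\circ\rhobar|_{\Gammma_{F_{\tilde{v}}}}}$. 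The same applies to $v\in S$ not above $p$ and not dividing any $q\in\TT$: there $\rhobar(I_{F_{\tilde{v}}})$ has prime-to-$p$ order, $r_{\mathrm{min}}$ visibly carries a minimal lift of $\rhobar|_{\Gammma_{F_{\tilde{v}}}}$ to a minimal lift of $r_{\mathrm{min}}\circ\rhobar|_{\Gammma_{F_{\tilde{v}}}}$ in the sense of \cite[\S 4.4]{Patrikis}, and $R^{\mathcal{C}_v}_{r_{\mathrm{min}}\circ\rhobar|_{\Gammma_{F_{\tilde{v}}}}}$ is the corresponding minimal deformation ring, so again functoriality suffices.

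The one case that needs genuine work is $v\mid q$ with $q\in\TT$. The Steinberg deformation condition $\mathcal{P}_v$ of \cite[\S 4.3]{Patrikis} is defined using the principal homomorphism $\varphi\colon\mathrm{PGL}_2\to G$, with $\rhobar(\mathrm{Fr}_v)=\varphi(\mathrm{diag}(q,1))$, and its lifts are, up to conjugacy, tame of Steinberg shape along $\varphi$. I would use the branching $r_{\mathrm{min}}\circ\varphi\cong\mathrm{S}^{16}\oplus\mathrm{S}^8\oplus\mathrm{S}^0$ of \cite[\S 7]{gross} and compare the matrices of $\mathrm{Fr}_v$ and of a topological generator $\tau_v$ of tame inertia to check that $r_{\mathrm{min}}$ composed with a suitable lift in $\mathcal{P}_v$ is exactly the lift $\rho_{\tilde{v}}=r_{16}\oplus r_8\oplus r_0$ used to single out $\mathcal{C}_v$. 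It follows that $\rho_{\tilde{v}}$ lies in the image of $\mathrm{Spec}\,R^{\mathcal{P}_v}_{\rhobar|_{\Gammma_{F_{\tilde{v}}}}}[1/p]\to\mathrm{Spec}\,R^{\square}_{r_{\mathrm{min}}\circ\rhobar|_{\Gammma_{F_{\tilde{v}}}}}[1/p]$; this image is irreducible because the source is, so its closure is an irreducible closed subset lying inside a single irreducible component of the target, and that component necessarily contains $\rho_{\tilde{v}}$. Since $\rho_{\tilde{v}}$ is a robustly smooth point, lying on the unique component $\mathcal{C}_v$, the image lands set-theoretically in $\mathcal{C}_v$; and because $R^{\mathcal{P}_v}_{\rhobar|_{\Gammma_{F_{\tilde{v}}}}}$ is reduced and $\Z_p$-flat, the defining universal property of $R^{\mathcal{C}_v}_{r_{\mathrm{min}}\circ\rhobar|_{\Gammma_{F_{\tilde{v}}}}}$ recorded in \cite[\S 1.3]{BLGGT} forces $g_v$ to factor through it.

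I expect the Steinberg case to be the main obstacle: the factorization there is not soft, but turns on the explicit identification of $r_{\mathrm{min}}$ applied to the ${}^{L}G$-valued Steinberg deformations with the hand-picked component $\mathcal{C}_v$, i.e. on unwinding the Steinberg deformation condition of \cite{Patrikis} together with the principal $\mathrm{SL}_2$ branching of $r_{\mathrm{min}}$. The other places reduce to the functoriality of the ordinary (at $p$) and minimal (away from $p$) local deformation conditions under $r_{\mathrm{min}}$.
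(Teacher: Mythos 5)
Your proposal is correct and follows essentially the same route as the paper: functoriality of framed lifting rings gives the map out of $R^{\square}_{r_{\mathrm{min}}\circ \rhobar|_{\Gammma_{F_{\tilde{v}}}}}$, the cases $v\mid p$ (via $r_{\mathrm{min}}(B)\subset B_{27}$) and the minimal primes are formal, and the Steinberg case reduces to showing every $\Qbar_p$-point of the (irreducible, formally smooth) quotient $R^{\mathcal{P}_v}_{\rhobar|_{\Gammma_{F_{\tilde{v}}}}}$ lies on the component $\mathcal{C}_v$ through $\rho_{\tilde{v}}$. The only cosmetic difference is that where you argue directly with irreducibility of the image of $\mathrm{Spec}\,R^{\mathcal{P}_v}_{\rhobar|_{\Gammma_{F_{\tilde{v}}}}}[1/p]$ together with the explicit identification of $\rho_{\tilde{v}}$ as $r_{\mathrm{min}}$ of a Steinberg lift, the paper simply cites \cite[Lemma 1.3.5]{BLGGT} for the same conclusion.
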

\begin{proof}
 For $v \vert p$, this follows directly from the definitions once we check that $r_{\mathrm{min}}(B) \subset B_{27}$. Let $\lambda= \sum_{\alpha \in \Delta} \tilde{n}_{v, \alpha} \alpha$. Since all $\tilde{n}_{v, \alpha}$ are positive, $B$ is the locus of $g \in G$ where $\lim_{t \to 0} \mathrm{Ad}(\lambda(t))g$ exists. By construction, $B_{27}$ is the locus where $\lim_{t \to 0} \mathrm{Ad}(r_{\mathrm{min}}\circ \lambda(t))g$ exists. The claim follows.

For $v \vert q$, we write $R^{\mathcal{P}_v}_{\rhobar|_{\Gammma_{F_{\tilde{v}}}}}$ for the Steinberg lifting ring; recall that under our hypotheses it is a power series ring over $\Z_p$ (in $\dim(\mathfrak{g})$ variables). We have surjections
\[
 R^{\square}_{r_{\mr{min}}\circ \rhobar|_{\Gammma_{F_{\tv}}}} \to R^{\square}_{\rhobar|_{\Gammma_{F_{\tv}}}} \to R^{\mathcal{P}_v}_{\rhobar|_{\Gammma_{F_{\tv}}}}.
\]
Recall that $R^{\mathcal{C}_v}_{r_{\mr{min}}\circ \rhobar|_{\Gammma_{F_{\tv}}}}$ is the maximal reduced $p$-torsion-free quotient of $R^{\square}_{r_{\mr{min}}\circ \rhobar|_{\Gammma_{F_{\tv}}}}$ whose $\Qbar_p$-points lie on $\mathcal{C}_v$. Since $R^{\mathcal{P}_v}_{\rhobar|_{\Gammma_{F_{\tv}}}}$ is a reduced, $p$-torsion-free quotient of $R^{\square}_{r_{\mr{min}}\circ \rhobar|_{\Gammma_{F_{\tv}}}}$, it suffices to show that every $\Qbar_p$-point of $R^{\mathcal{P}_v}_{\rhobar|_{\Gammma_{F_{\tv}}}}$ lies on the same irreducible component of $R^{\square}_{r_{\mr{min}}\circ \rhobar|_{\Gammma_{F_{\tv}}}}[1/p]$ as the representation $\rho_{\tv}$ (defined above) that characterizes the component $\mathcal{C}_v$. This claim follows from \cite[Lemma 1.3.5]{BLGGT}. 

For the other ramified primes $v \in S$, the lemma is evident.  
\end{proof}
Now consider the global deformation problem (in the sense of \cite[\S 1.5]{BLGGT}; see \cite{CHT} for details)
\[
 \mathcal{S}=(F/F^+, S, \{\tilde{v}\}_{v \in S}, \Z_p, r_{\mathrm{min}}\circ\rhobar, \mu, \{\mathcal{C}_v\}_{v \in S}).
\]
Since $r_{\mathrm{min}}\circ\rhobar|_{\Gammma_F}$ is absolutely irreducible, this deformation functor is pro-represented by some $R^{\mathcal{S}}_{r_{\mathrm{min}}\circ\rhobar}$.
\begin{lemma}\label{finite}
 The representation $r_{\mathrm{min}}$ induces a surjection $R^{\mathcal{S}}_{r_{\mathrm{min}}\circ\rhobar} \to R_{\rhobar}^{\mathcal{P}}$.
\end{lemma}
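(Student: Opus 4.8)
The plan is to construct the map by functoriality from $r_{\mathrm{min}}$ and then deduce surjectivity from a tangent-space comparison. Let $\rho^{\mathrm{univ}} \colon \Gammma_S \to {}^{L}G(R_{\rhobar}^{\mathcal{P}})$ be the universal deformation of type $\mathcal{P}$, which exists by Lemma~\ref{Gdef}. Composing with $r_{\mathrm{min}} \colon {}^{L}G \to \mathcal{G}_{27}$ gives a lift $r_{\mathrm{min}} \circ \rho^{\mathrm{univ}}$ of $r_{\mathrm{min}} \circ \rhobar$ to $\mathcal{G}_{27}(R_{\rhobar}^{\mathcal{P}})$; I would check it is a deformation of type $\mathcal{S}$. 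It is unramified outside $S$. Its multiplier $\nu \circ r_{\mathrm{min}} \circ \rho^{\mathrm{univ}}$ equals $\mu$: the homomorphism $\nu \circ r_{\mathrm{min}} \colon {}^{L}G \to \mathbf{G}_m$ is trivial on $G$ and sends $\tau \mapsto -1$, so this character factors through the projection of $\rho^{\mathrm{univ}}$ to $\Out(G)$; since $\Out(G)$ is a constant group scheme, that projection coincides with the one for $\rhobar$, which is the fixed nontrivial character of $\Gal(F/F^+)$, whence the multiplier is $\nu \circ (r_{\mathrm{min}} \circ \rhobar) = \mu$. Finally, for each $v \in S$ the restriction $r_{\mathrm{min}} \circ \rho^{\mathrm{univ}}|_{\Gammma_{F_{\tilde{v}}}}$ satisfies the local condition $\mathcal{C}_v$, because $\rho^{\mathrm{univ}}|_{\Gammma_{F_{\tilde{v}}}}$ satisfies $\mathcal{P}_v$ and Lemma~\ref{localliftmap} supplies the compatible map $R^{\mathcal{C}_v}_{r_{\mathrm{min}}\circ\rhobar|_{\Gammma_{F_{\tilde{v}}}}} \to R^{\mathcal{P}_v}_{\rhobar|_{\Gammma_{F_{\tilde{v}}}}}$. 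Thus $r_{\mathrm{min}} \circ \rho^{\mathrm{univ}}$ is a deformation of type $\mathcal{S}$, and the universal property of $R^{\mathcal{S}}_{r_{\mathrm{min}}\circ\rhobar}$ yields the desired map $R^{\mathcal{S}}_{r_{\mathrm{min}}\circ\rhobar} \to R_{\rhobar}^{\mathcal{P}}$.

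By the topological Nakayama lemma, a local morphism of complete Noetherian local $\Z_p$-algebras with residue field $\F_p$ is surjective as soon as the induced map on mod-$p$ tangent spaces
\[
\mathrm{Def}_{\rhobar}^{\mathcal{P}}(\F_p[\epsilon]) \longrightarrow \mathrm{Def}_{r_{\mathrm{min}}\circ\rhobar}^{\mathcal{S}}(\F_p[\epsilon])
\]
is injective. Under the standard identifications, the source is the Selmer group $H^1_{\mathcal{P}}(\Gammma_S, \rhobar(\mathfrak{g}))$ sitting inside $H^1(\Gammma_S, \rhobar(\mathfrak{g}))$ via the local tangent spaces $L_v$ of Lemma~\ref{Gdef}, the target is the Selmer group inside $H^1(\Gammma_S, (r_{\mathrm{min}}\circ\rhobar)(\mathfrak{gl}_{27}))$ cut out by the tangent spaces of the rings $R^{\mathcal{C}_v}_{r_{\mathrm{min}}\circ\rhobar|_{\Gammma_{F_{\tilde{v}}}}}$, and the map between them is induced by the differential $dr_{\mathrm{min}} \colon \mathfrak{g} \hookrightarrow \mathfrak{gl}_{27}$ (a deformation $(1+\epsilon\phi)\rhobar$ maps to $(1+\epsilon\, dr_{\mathrm{min}}(\phi))(r_{\mathrm{min}}\circ\rhobar)$). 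Lemma~\ref{localliftmap} already ensures that each $L_v$ lands in the local condition at $v$ on the $\mathcal{G}_{27}$ side, so this is genuinely a map of Selmer groups; its injectivity therefore follows once we know the ambient global map $H^1(\Gammma_S, \rhobar(\mathfrak{g})) \to H^1(\Gammma_S, (r_{\mathrm{min}}\circ\rhobar)(\mathfrak{gl}_{27}))$ is injective.

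For the latter I would show that $dr_{\mathrm{min}}$ realizes $\mathfrak{g} = \mathfrak{e}_6$ as a direct summand of $\mathfrak{gl}_{27}$ as a module for ${}^{L}G$, hence (via $\rhobar$) as a $\Gammma_S$-module direct summand. The differential $dr_{\mathrm{min}}$ is ${}^{L}G$-equivariant: on $G$ it intertwines the adjoint actions, and differentiating $r_{\mathrm{min}}(\tau g \tau^{-1}) = A \cdot {}^{t}r_{\mathrm{min}}(g)^{-1} A^{-1}$ gives $dr_{\mathrm{min}}(\mathrm{Ad}(\tau)X) = -A \cdot {}^{t}(dr_{\mathrm{min}}(X)) \cdot A^{-1}$, matching the twisted-conjugation action of $\jmath$. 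The trace pairing $(X, Y) \mapsto \mathrm{tr}(XY)$ on $\mathfrak{gl}_{27}$ is nondegenerate and ${}^{L}G$-invariant, and its restriction to $\mathfrak{e}_6$ is a nonzero scalar multiple of the Killing form of $\mathfrak{e}_6$, which is nondegenerate in characteristic $p$ since $p > 56$ is far larger than the Coxeter number $h_G = 12$. Hence $\mathfrak{gl}_{27} = \mathfrak{g} \oplus \mathfrak{g}^{\perp}$ as ${}^{L}G$-modules; passing to $\Gammma_S$-cohomology splits $H^1(\Gammma_S, \rhobar(\mathfrak{g}))$ off as a direct summand, so the inclusion into $H^1(\Gammma_S, (r_{\mathrm{min}}\circ\rhobar)(\mathfrak{gl}_{27}))$ is injective. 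Combining the three paragraphs proves the lemma.

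The main obstacle, and the only input that is not formal, is this last representation-theoretic point: that the bound $p > 56$ really forces the restriction to $\mathfrak{e}_6$ of the trace form of the $27$-dimensional representation (equivalently, the Killing form of $\mathfrak{e}_6$) to remain nondegenerate modulo $p$, so that $\mathfrak{e}_6$ splits off $\mathfrak{gl}_{27}$ as a Galois-module summand. Everything else—the construction of the map, the Nakayama reduction, and the identification of tangent spaces with Selmer groups—is routine once Lemmas~\ref{Gdef} and~\ref{localliftmap} are available.
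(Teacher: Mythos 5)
Your proof is correct, and its skeleton is the same as the paper's: the map comes from Lemma~\ref{localliftmap} (your verification of the multiplier and local conditions is a fleshed-out version of what the paper leaves implicit), and surjectivity is reduced via Nakayama to injectivity on tangent spaces, which in turn reduces to $\rhobar(\mathfrak{g})$ being a direct summand of $\mathrm{ad}(r_{\mathrm{min}}\circ\rhobar)$ as a $\Gammma_{F^+}$-module. The one place you genuinely diverge is in establishing that direct-summand property, which you flag as ``the only input that is not formal.'' You realize $\mathfrak{g}$ as an orthogonal complement inside $\mathfrak{gl}_{27}$ for the ${}^{L}G$-invariant trace form, using that the restriction of the trace form to $\mathfrak{e}_6$ (a multiple of the Killing form) remains nondegenerate mod $p$; this is correct for $p>56$ (the relevant discriminant involves only the primes $2$, $3$, $5$ and the Dynkin index $6$ of $r_{\mathrm{min}}$), though ``far larger than the Coxeter number'' is not by itself the right criterion and you would need to pin down the discriminant. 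The paper avoids this entirely: $\rhobar$ takes values in $N_G(T)(\F_p)\rtimes\Out(G)$, a finite group of order prime to $p$ (as $p>56$ and $|W_G|=2^7\cdot 3^4\cdot 5$), so both $\rhobar(\mathfrak{g})$ and $\mathrm{ad}(r_{\mathrm{min}}\circ\rhobar)$ are modules over a finite prime-to-$p$ group and Maschke's theorem splits off any submodule. In other words, the step you identify as the main obstacle is a one-line consequence of how $\rhobar$ was built. Your route buys something the paper's does not --- a splitting that is ${}^{L}G$-equivariant and hence independent of the particular residual representation --- at the cost of a representation-theoretic computation; the paper's is computation-free but tied to the prime-to-$p$ image of the specific $\rhobar$.
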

\begin{proof}
 There is an induced map $R^{\mathcal{S}}_{r_{\mathrm{min}}\circ\rhobar} \to R_{\rhobar}^{\mathcal{P}}$ by Lemma \ref{localliftmap}. It is a surjection because the $\Gammma_{F^+}$-module $\rhobar(\mathfrak{g})$ is a direct summand of $\mathrm{ad}(r_{\mathrm{min}}\circ \rhobar)$ (indeed, these representations factor through representations of a finite prime-to-$p$ group), so the associated map on tangent spaces is injective; dually, the map on co-tangent spaces is surjective, and we conclude by Nakayama's lemma.
\end{proof}
Finally, we can invoke the main results of \cite{BLGGT} to deduce that $R^{\mathcal{P}}_{\rhobar}$ has a $\Qbar_p$-point $\rho$ such that $r_{\mr{min}}\circ \rho$ is potentially automorphic:
\begin{theorem} \label{theorem:lifts}
 For sufficiently general choice of lifts $\tilde{n}_{v, \alpha}$ as in Equation \ref{HTweights}, the representation $\rhobar \colon \Gammma_{F^+} \to {}^L G({\F}_p)$ constructed in Proposition \ref{seed} admits a geometric lift $\rho \colon \Gammma_{F^+} \to {}^L G(\Zbar_p)$ such that:
\begin{enumerate}
 \item The Zariski closure of the image of $\rho$ is ${}^L G$. \label{lifts:zariski}
 \item The composite $r_{\mr{min}}\circ \rho|_{\Gammma_F}$ is potentially automorphic in the sense of \cite{BLGGT}. \label{part:automorphic}
   \item The composite $r_{\mr{min}}\circ \rho|_{\Gammma_F}$ belongs to a compatible system of ${\elll}$-adic representations: there exist a number field $M$ and a strictly pure  --- in the sense of \cite[\S 5.1]{BLGGT}
    --- compatible system $$r_{\lambda} \colon \Gammma_F \to \mr{GL}_{27}(\overline{M}_{\lambda})$$
   indexed over all finite places $\lambda$ of $M$. 
   In particular,
   the restriction of~$r_{\lambda}$ to~$I_v$ for~$v$ above the auxiliary primes~$q \in \TT$ is unipotent with Jordan blocks of size~$1$, $9$, and~$17$ as long as~$v$
   has residue characteristic different from~$\lambda$. \label{grossblock}
   \end{enumerate}

\end{theorem}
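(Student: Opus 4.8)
The plan is to execute the Khare--Wintenberger strategy sketched above: control the $\mathcal{G}_{27}$-side ring $R^{\mathcal{S}}_{r_{\mr{min}}\circ\rhobar}$ by automorphy, transport that control to $R^{\mathcal{P}}_{\rhobar}$ along the surjection of Lemma~\ref{finite}, extract a geometric $\Qbar_p$-point $\rho$, and read off the three assertions. The first input is the \emph{residual} automorphy of $r_{\mr{min}}\circ\rhobar|_{\Gammma_F}$. Since $\rhobar$ takes values in $N_G(T)(\F_p)\rtimes\Out(G)$, the composite $r_{\mr{min}}\circ\rhobar|_{\Gammma_F}$ is a monomial representation: it is induced from a finite-order character $\psi$ of the subgroup $\Gammma_K\subset\Gammma_F$ fixing a chosen weight $\lambda\in\Lambda_{\mr{min}}$, where $K\subset E$ is the corresponding subextension with $[K:F]=27$ (the stabiliser of $\lambda$ in $\Gal(E/F)\cong P$ has order $3$). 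Because $P$ is a $3$-group, this stabiliser is subnormal in $P$ through a length-three chain of index-$3$ normal steps, so inducing up the chain exhibits the Teichm\"uller lift of $r_{\mr{min}}\circ\rhobar|_{\Gammma_F}$ via iterated cyclic automorphic induction (Arthur--Clozel); it is cuspidal because $r_{\mr{min}}\circ\rhobar|_{\Gammma_{F(\zeta_p)}}$ is absolutely irreducible (Proposition~\ref{seed}). Thus $r_{\mr{min}}\circ\rhobar|_{\Gammma_F}$ is residually automorphic via a potentially diagonalizable (finite image) lift.

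With this seed, invoke the automorphy lifting machinery of~\cite{BLGGT}. The hypotheses are arranged by Proposition~\ref{seed}: absolute irreducibility of $r_{\mr{min}}\circ\rhobar|_{\Gammma_{F(\zeta_p)}}$ together with $p>56=2(27+1)$ gives adequacy of the residual image; the extension of $r_{\mr{min}}$ to $\mathcal{G}_{27}$, together with the Lemma giving $\nu\circ r_{\mr{min}}(c)=-1$, makes $r_{\mr{min}}\circ\rho|_{\Gammma_F}$ odd essentially conjugate self-dual; the ordinary condition at $v\mid p$ renders $r_{\mr{min}}\circ\rho$ potentially diagonalizable there, with distinct Hodge--Tate weights $h_\lambda$ (for $\tilde{n}_{v,\alpha}$ sufficiently general and positive) matching the component $\mathcal{C}_v$; and at the auxiliary primes $q\in\TT$ the explicit lift $\rho_{\tilde{v}}=r_{16}\oplus r_8\oplus r_0$ is robustly smooth, pinning down a component $\mathcal{C}_v$ realized by automorphic forms Steinberg at $v$. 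Patching then yields that $R^{\mathcal{S}}_{r_{\mr{min}}\circ\rhobar}$ is a finite $\Z_p$-module all of whose $\Qbar_p$-points are automorphic. Combining this with the surjection $R^{\mathcal{S}}_{r_{\mr{min}}\circ\rhobar}\twoheadrightarrow R^{\mathcal{P}}_{\rhobar}$ of Lemma~\ref{finite}, the ring $R^{\mathcal{P}}_{\rhobar}$ is finite over $\Z_p$; and since Lemma~\ref{Gdef} presents it as a quotient of $\Z_p[[x_1,\dots,x_\delta]]$ by an ideal with at most $\delta$ generators, $\dim R^{\mathcal{P}}_{\rhobar}\geq 1$. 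A finite $\Z_p$-algebra of positive dimension has non-zero generic fibre, so $R^{\mathcal{P}}_{\rhobar}$ has a $\Qbar_p$-point $\rho\colon\Gammma_{F^+}\to{}^LG(\Zbar_p)$ lifting $\rhobar$; by the definition of the local conditions $\mathcal{P}_v$ it is de Rham at $p$ with Hodge--Tate weights $\{h_\lambda\}$ and Steinberg at $\TT$, and its image under $r_{\mr{min}}$ is a $\Qbar_p$-point of $R^{\mathcal{S}}_{r_{\mr{min}}\circ\rhobar}$, so $r_{\mr{min}}\circ\rho|_{\Gammma_F}$ is potentially automorphic; this proves~(\ref{part:automorphic}).

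For~(\ref{lifts:zariski}) I would follow~\cite{Patrikis}. Let $\mathbf{H}$ be the Zariski closure of $\rho(\Gammma_{F^+})$. Its identity component lies in $G$ and is reductive (irreducibility of $r_{\mr{min}}\circ\rho|_{\Gammma_F}$); it contains a regular unipotent element of $G$, obtained via the principal homomorphism $\varphi\colon\mr{PGL}_2\to G$ (recall $r_{\mr{min}}\circ\varphi=\mr{S}^{16}\oplus\mr{S}^8\oplus\mr{S}^0$) from the Steinberg ramification at $v\mid q$; and it contains the ordinary Hodge--Tate cocharacter $\sum_{\alpha}\tilde{n}_{v,\alpha}\alpha^\vee$, which is a regular cocharacter of $T$ once the $\tilde{n}_{v,\alpha}$ are sufficiently general. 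By the classification of reductive subgroups of $E_6$ containing a regular unipotent element, these constraints force $\mathbf{H}^\circ=G$, and since every complex conjugation maps onto $\tau\in\Out(G)$ (it acts as a split Cartan involution) we conclude $\mathbf{H}={}^LG$ and hence $\overline{\rho(\Gammma_F)}=G$. For~(\ref{grossblock}), the automorphic representation $\pi$ attached to $r_{\mr{min}}\circ\rho$ is regular algebraic (from the regular Hodge--Tate weights), conjugate self-dual (the polarization above), and cuspidal (irreducibility), so by the construction of Galois representations for such $\pi$ and the purity results of~\cite{Yoshida,MR2800722,Caraiani} it generates over a number field $M$ a strictly pure strictly compatible system $r_\lambda\colon\Gammma_F\to\GL_{27}(\overline{M}_\lambda)$ (descending to $F$, if $\pi$ only lives over a CM extension, via $\Gal(H/F)$-invariance of the system and the patching of compatible systems of~\cite[\S5]{BLGGT}). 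Local--global compatibility at a place $v\mid q\in\TT$ whose residue characteristic is distinct from $\lambda$ identifies $r_\lambda|_{I_v}$ with the unipotent attached to $\pi_v$, of Jordan type $(\dim\mr{S}^{16},\dim\mr{S}^8,\dim\mr{S}^0)=(17,9,1)$.

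The hardest step is the middle one: making the Khare--Wintenberger input actually go through, i.e.\ showing that $R^{\mathcal{S}}_{r_{\mr{min}}\circ\rhobar}$ is a finite $\Z_p$-module with automorphic $\Qbar_p$-points. This demands matching the chosen components $\mathcal{C}_v$ (especially at $p$ and at the Steinberg primes) to those appearing in the patched module of~\cite{BLGGT}, verifying adequacy (the reason for the bound $p>56$) and potential diagonalizability of the ordinary deformations, and correctly feeding in the residual automorphy produced by iterated automorphic induction over the solvable extension $E/F$; the passage through a solvable base change and back, used to obtain an honest automorphy statement and a compatible system over $F$, also belongs here. The ``exceptional monodromy'' computation in~(\ref{lifts:zariski}) is indispensable but is imported, with minor changes, from~\cite{Patrikis}.
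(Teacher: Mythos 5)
Your overall skeleton --- finiteness of $R^{\mathcal{S}}_{r_{\mr{min}}\circ\rhobar}$, transport along the surjection of Lemma~\ref{finite}, the dimension count from Lemma~\ref{Gdef} to extract a $\Qbar_p$-point, \cite[Lemma 7.8]{Patrikis} for Zariski density, and local--global compatibility for the Jordan type --- is exactly the paper's. But your mechanism for the central finiteness step has a genuine gap. You propose to seed the Khare--Wintenberger/patching argument with \emph{residual} automorphy of $r_{\mr{min}}\circ\rhobar|_{\Gammma_F}$, obtained by iterated cyclic automorphic induction of (the Teichm\"uller lift of) a finite-order character $\psi$ of $\Gammma_K$, $[K:F]=27$. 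The induction of a finite-order character is an automorphic representation of $\GL_{27}(\mathbf{A}_F)$ that is \emph{not regular algebraic}: all its Hodge--Tate weights vanish. The automorphy lifting theorems of \cite{BLGGT} (and any patching argument controlling $R^{\mathcal{S}}$ with its regular-weight components $\mathcal{C}_v$ at $v\mid p$) require a congruent RAECSDC seed of regular weight; your monomial lift cannot play that role, and upgrading $\psi$ to an algebraic Hecke character with regular induced infinity type while preserving the residual representation is a nontrivial problem you have not addressed. The paper sidesteps residual automorphy entirely: it cites \cite[Theorem 4.3.1]{BLGGT}, whose hypotheses are purely about $\rhobar$ (adequacy, oddness, the chosen components) and whose proof manufactures the needed automorphic congruence internally via potential automorphy (the Dwork family), at the cost of only ever concluding \emph{potential} automorphy --- which is all Theorem~\ref{theorem:lifts}(\ref{part:automorphic}) claims.

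A secondary under-justification: in part~(\ref{lifts:zariski}) you assert that the image of $\rho$ contains a regular unipotent ``from the Steinberg ramification at $v\mid q$.'' Membership of $\rho|_{\Gammma_{F_{\tv}}}$ in the Steinberg deformation space does not by itself guarantee that this particular characteristic-zero point has nondegenerate inertial monodromy. The paper proves this by going to the automorphic side: local--global compatibility at $\elll\neq p$ identifies the Weil--Deligne representation of $r_{\mr{min}}\circ\rho|_{\Gammma_{F'_{v'}}}$ with that of $\pi_{v'}$, and \cite[Lemma 1.3.4(2)]{BLGGT} (uniqueness of the component through a robustly smooth point) then forces the inertial restriction to agree with that of the explicit lift $\rho_{\tv}=r_{16}\oplus r_8\oplus r_0$, giving Jordan blocks $17,9,1$ and hence a regular unipotent. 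You invoke the right ingredients for part~(\ref{grossblock}) but need to route them through part~(\ref{lifts:zariski}) as well.
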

 \begin{proof}
  By the proof of \cite[Theorem 4.3.1]{BLGGT} (see especially the last paragraph), $R^{\mathcal{S}}_{r_{\mr{min}}\circ \rhobar}$ is a finite $\Z_p$-module. Lemma \ref{finite} then implies that $R^{\mathcal{P}}_{\rhobar}$ is a finite $\Z_p$-module. We have already seen in Lemma \ref{Gdef} that it has dimension at least one, so we conclude that $R^{\mathcal{P}}_{\rhobar}(\Zbar_p)$ is non-empty. Let $\rho$ be an element of $R^{\mathcal{P}}_{\rhobar}(\Zbar_p)$. Then:
\begin{itemize}
 \item By \cite[Theorem 4.5.1]{BLGGT}, the composite $r_{\mr{min}}\circ \rho|_{\Gammma_{F}}$ is potentially automorphic (in the sense of \cite{BLGGT}).
 \item The Zariski closure $G_{\rho}$ of the image of $\rho|_{\Gammma_F}$ is $G$: by \cite[Lemma 7.8]{Patrikis}, it suffices to show 
\begin{itemize} \item $G_{\rho}$ is reductive; 
\item $G_{\rho}$ contains a regular unipotent element of $G$; and
\item for some $v \vert p$, $\rho|_{\Gammma_{F_{\tv}}}$ is $B$-ordinary, and, for all simple roots $\alpha$, $\alpha \circ \rho|_{I_K}= \kapppa^{r_{\alpha}}$ for some finite extension $K/F_{\tv}$ and for distinct integers $r_{\alpha}$.
\end{itemize}
Reductivity is immediate since $\rho$ is irreducible. The third condition follows by taking the integers $\{\tilde{n}_{v, \alpha}\}_{\alpha \in \Delta}$ in the definition of the local condition $\mathcal{P}_v$ (see the discussion preceeding Lemma \ref{Gdef}) to be sufficiently general. Finally, to show that the image of $\rho$ contains a regular unipotent element, we check that for $v \vert q$, the tame inertia in $\rho|_{\Gammma_{F_{\tv}}}$ acts by a regular unipotent. This would follow from the corresponding claim that $r_{\mr{min}}\circ \rho(I_{F_{\tv}})$ contains a unipotent element with Jordan blocks of dimension 17, 9, and 1. Let $\pi$ be the automorphic representation of $\mr{GL}_{27}(\mathbf{A}_{F'})$, for a suitable finite extension $F'/F$, witnessing the potential automorphy of $r_{\mr{min}}\circ \rho$, and let $v'$ be a place of $F'$ above $v$. By
local-global compatibility at~${\elll} \neq p$ (Proved in general by~\cite{Caraiani},
but known for odd dimensional representations by
previous work of~\cite{harris-taylor,Yoshida,MR2800722}), the (Frobenius semi-simple) Weil--Deligne representation associated to $r_{\mr{min}}\circ\rho|_{\Gammma_{F'_{v'}}}$ is isomorphic to the image of $\pi_{v'}$ under the local Langlands correspondence. It follows (eg, using \cite[Lemma 1.3.2(1)]{BLGGT}) that $r_{\mr{min}}\circ \rho|_{\Gammma_{F_{\tv}}}$ lies on a unique irreducible component of $R^{\square}_{r_{\mr{min}}\circ \rho|_{\Gammma_{F_{\tv}}}}$. By construction, $r_{\mr{min}}\circ \rho|_{\Gammma_{F_{\tv}}}$ and $r_{\mr{min}} \circ \rho_{\tv}$ lie on the same irreducible component $\mathcal{C}_v$ of $R^{\square}_{r_{\mr{min}}\circ \rhobar|_{\Gammma_{F_{\tv}}}}$, and since they both lie on a unique component, \cite[Lemma 1.3.4(2)]{BLGGT} implies their inertial restrictions are isomorphic. The result follows.

The claim that $r_{\mr{min}}\circ \rho$ can be put in a compatible system, follows from \cite[Theorem 5.5.1]{BLGGT}, and the claim concerning the restriction to~$I_v$ for~$v|q$
follows as above from local-global compatibility at~$\elll \ne p$.
\end{itemize}
 \end{proof}
In the next section, we will show that in fact all members of the compatible system $\{r_{\lambda}\}_{\lambda}$ have algebraic monodromy group equal to $G$.

\section{Controlling the image in the compatible system}

Theorem~\ref{theorem:lifts} provides the existence of a compatible system~$\{r_{\lambda}\}$  of~$G_{F}$ representations with
the property that the geometric monodromy group at one
prime is precisely~$E_6$.
Our goal in this section is to use known properties of compatible systems (\cite{LP}) together with the additional
properties our compatible system satisfies at the auxiliary primes~$\TT$ to ensure that the monodomy group
is~$E_6$ at~\emph{all} primes~$\lambda$.

  Let~$M$ denote the coefficient field of our compatible system.
  
  \begin{lemma} \label{lemma:stuff} The monodromy group~$G$  for each prime~$\lambda$ of~$M$ has the following properties:
 \begin{enumerate} 
 \item The component group of~$G$ is is trivial.
 \item The rank of~$G$ is~$6$.
 \item The formal character of the torus~$\chi: T \rightarrow \GL_{27}$ is the
 formal character of the torus of~$E_6$ under the minuscule representation.
 \item If~$G = G^{\circ}$ acts irreducibly, then~$G$ is equal to~$E_6$.
 \item There exists a unipotent element in the image with Jordan blocks of size~$1$, $9$, and~$17$. \label{part:stuff}
 \end{enumerate}
 \end{lemma}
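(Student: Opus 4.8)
The plan is to reduce everything to two ingredients: the computation of the monodromy at a single distinguished prime, supplied by Theorem~\ref{theorem:lifts}, and the $\lambda$-independence of the basic invariants of a (strictly pure) compatible system. Fix once and for all a place $\lambda_0$ of $M$ above the prime $p$ (i.e.\ $\ell=p$) used to build $\rho$ in Proposition~\ref{seed}, so that Theorem~\ref{theorem:lifts}(\ref{lifts:zariski}) identifies the monodromy group $G_{\lambda_0}$ of $r_{\lambda_0}=r_{\min}\circ\rho|_{\Gal(\overline{F}/F)}$ with $r_{\min}(E_6)$: connected, of rank $6$, with formal character that of the minuscule representation, and (by Theorem~\ref{theorem:lifts}(\ref{grossblock})) containing a unipotent element with Jordan blocks $1,9,17$.

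For (1), I would use that in a strictly pure compatible system the fixed field of the component group $\pi_0(G_\lambda)$ — the minimal extension of $F$ over which the image becomes Zariski-connected — is independent of $\lambda$ (a theorem of Serre, also recovered by the analysis of~\cite{LP}, and in our setting supported by the potential automorphy of the system and hence by local--global compatibility at all finite places~\cite{harris-taylor,Yoshida,MR2800722,Caraiani}); since this field is $F$ itself for $\lambda=\lambda_0$, every $G_\lambda$ is connected. For (2) and (3) I would invoke the $\lambda$-independence of $\rank G_\lambda^\circ$ and of the formal character of $(G_\lambda^\circ,\overline{M}_\lambda^{27})$ — equivalently, of the multiset of $27$ torus weights, which is where the matched Frobenius characteristic polynomials of the compatible system enter — again via~\cite{LP}; comparing with $\lambda_0$ forces rank $6$ and the $E_6$-minuscule formal character for all $\lambda$. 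Property (5) is then immediate: given $\lambda$, the set $\TT$ has two elements, so choose $q\in\TT$ of residue characteristic different from that of $\lambda$ and a place $v\mid q$ of $F^+$; by Theorem~\ref{theorem:lifts}(\ref{grossblock}) the restriction $r_\lambda|_{I_v}$ is unipotent with Jordan blocks $1,9,17$, so the image of $r_\lambda$ contains such an element.

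It remains to prove (4), which is purely a fact about reductive groups: a connected reductive $G\subset\GL_{27}$ acting irreducibly, of rank $6$, with the formal character of the $E_6$-minuscule representation, must equal $r_{\min}(E_6)$. The structural facts I would exploit about the weight multiset $\Lambda_{\min}$ are that its $27$ weights are distinct, form a single $W(E_6)$-orbit (hence sum to $0$ in $X^*(T)_{\Q}$, as $E_6$ has no nonzero $W$-invariant), and do not ``factor'' as a product grid. First, irreducibility forces the connected center of $G$ to act by scalars, and the vanishing of the weight-sum then forces that central torus to be trivial, so $G$ is semisimple of rank $6$ acting by a multiplicity-free irreducible $27$-dimensional representation. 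If $G$ were not almost simple, writing $G=G_1\times\cdots\times G_k$ with each factor acting nontrivially gives $27=\prod\dim V_i$, so the factor dimensions are $\{3,9\}$ or $\{3,3,3\}$ and the weight multiset of $V$ is a product grid $\{\mu^{(1)}_{i_1}+\cdots+\mu^{(k)}_{i_k}\}$ — concretely $G=\SL_3\times\SO_9$ on $\mathbf{C}^3\otimes\mathbf{C}^9$ or $G=\SL_3\times\SL_3\times\SL_3$ on $\mathbf{C}^3\otimes\mathbf{C}^3\otimes\mathbf{C}^3$; one then checks, using the decomposition of $\Lambda_{\min}$ under the maximal subgroup $A_2\times A_2\times A_2\subset E_6$ (or by comparing the full linear symmetry groups of the two configurations, the order $|W(E_6)|=51840$ being far too large for a product grid), that no such grid has the $E_6$-minuscule formal character. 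Hence $G$ is almost simple of rank $6$; the Weyl dimension formula shows that among $A_6,B_6,C_6,D_6,E_6$ only $E_6$ admits a $27$-dimensional irreducible representation, and among representations of $E_6$ only the two minuscule ones — interchanged by the outer automorphism, both realizing $r_{\min}(E_6)$ with the required formal character — have dimension $27$; therefore $G=r_{\min}(E_6)$.

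I expect the main obstacle to be step (4), and within it the exclusion of the ``product'' candidates of the correct dimension and rank such as $\SL_3\times\SO_9$ on $\mathbf{C}^3\otimes\mathbf{C}^9$: this is exactly where one must use the combinatorics specific to the $E_6$ weight polytope rather than dimension and rank counts alone. By contrast, (1)--(3) and (5) are essentially bookkeeping on top of Theorem~\ref{theorem:lifts} and the cited $\lambda$-independence results.
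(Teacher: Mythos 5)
Your proof of parts (1)--(3) and (5) is the paper's proof: (1)--(3) are exactly the $\lambda$-independence results of \cite{LP} (Propositions 6.12 and 6.14), anchored at the one prime where Theorem~\ref{theorem:lifts}(\ref{lifts:zariski}) gives monodromy $r_{\min}(E_6)$, and (5) is precisely the paper's use of the two-element set $\TT$ together with local--global compatibility. The genuine divergence is part (4): the paper disposes of it in one line by citing Theorem~5.6 of \cite{LP} ($E_6$ occurs in none of the ``basic similarity relations,'' so among connected groups acting irreducibly the formal character determines the group), whereas you reprove the relevant special case by hand --- killing the central torus via the vanishing weight sum, reducing the non-almost-simple case to $\mathfrak{sl}_3\times\mathfrak{so}_9$ on $\mathbf{C}^3\otimes\mathbf{C}^9$ or $\mathfrak{sl}_3^{\times 3}$ on $\mathbf{C}^3\otimes\mathbf{C}^3\otimes\mathbf{C}^3$, and using that only $\mathfrak{e}_6$ among the rank-$6$ simple algebras $A_6,B_6,C_6,D_6,E_6$ has a $27$-dimensional irreducible representation. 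The one step you leave as ``one then checks'' is the exclusion of the two tensor grids, and your first suggested method (restricting $\Lambda_{\mr{min}}$ to $A_2\times A_2\times A_2\subset E_6$) is not by itself conclusive: formal characters are compared via an \emph{arbitrary} isomorphism of the two rank-$6$ character lattices, not via any particular embedding of tori, so one needs an invariant of the abstract weight configuration. A clean affine-linear invariant closes the gap: the number of unordered triples of distinct weights summing to zero is $45$ for the $E_6$ minuscule configuration (exactly the count recorded in \S 4.1 of the paper), but $6^3/6=36$ for the $3\otimes3\otimes3$ grid and $(6\cdot 25)/6=25$ for the $3\otimes 9$ grid (whose $\mathfrak{so}_9$-weights are $\{0,\pm e_i\}$). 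With that supplied your argument for (4) is complete; it trades the appeal to the Larsen--Pink classification theorem for an elementary verification in the spirit of the combinatorics the paper develops in \S 4.1--4.2 anyway.
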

 
 \begin{proof}
The first three properties involve quantities which 
are constant in a compatible system, c.f.~Propositions~6.12 and~6.14 of~\cite{LP}.
The fourth claim follows  from Theorem~5.6 of~\cite{LP}, noting that~$E_6$ does not occur
in the explicit list of groups which gives rise to any of the basic similarity relations of~\S5 of \emph{ibid}.
This is enough to deduce that the Lie algebra of the monodromy representation must be~$\mathfrak{e}_6$,
from which it follows that~$G$ is~$E_6$ (acting in the natural way).
The nilpotent operator of the Weil--Deligne representation of~$r_{\lambda}$ at the auxiliary prime~$v|q$ for~$q \in \TT$
 decomposes (by Theorem~\ref{theorem:lifts} part~\ref{grossblock})
 into Jordan blocks of size~$1$, $9$, and~$17$, \emph{assuming} that the residual characteristic of~$q$ is different from that of~$\lambda$.
  Yet~$\TT$ was chosen (for this purpose!) to consist of two primes, so this holds for at least one prime~$v$.
\end{proof}

We now show that these conditions are sufficient to imply --- purely by representation theoretic methods --- that~$G$ acts irreducibly, which will prove the claims in Theorem~\ref{theorem:main} concerning the monodromy groups of~$\{r_{\lambda}\}$.
In light of the existence of  the unipotent element whose existence is guaranteed by Lemma~\ref{lemma:stuff} part~\ref{part:stuff}, it suffices to show that~$G$ cannot act faithfully on a direct
sum of representations of dimension
$$27 = 26 + 1 = 18 + 9 = 17 + 10 = 17+ 9 + 1$$
unless~$G = E_6$ and the representation is irreducible.

 \subsection{The Formal Character of~\texorpdfstring{$\e_6$}{E6}}
 The root lattice~$\Phi$ of~$E_6$ consists of~$72$ roots; it may be given as~$\Phi^{+} \cup \Phi^{-},$
 where the positive roots~$\Phi^{+}$ are given explicitly in~$\R^6$ by
 the~$\displaystyle{2 \binom{5}{2}}= 20$ vectors~$e_{i} \pm e_{j}$ for~$2 \le i < j \le 6$, and
 the~$2^4 = \displaystyle{ \binom{5}{0} + \binom{5}{2} + \binom{5}{4}} = 16$ vectors
$$\left(\frac{\sqrt{3}}{2}, \frac{\pm 1}{2}, \frac{\pm 1}{2},
\frac{\pm 1}{2}, \frac{\pm 1}{2}, \frac{\pm 1}{2}\right)$$
where there are an even number of minus signs. If~$2 \rho = \sum_{\alpha \in \Phi^{+}} \alpha$,
then
$\rho = (4 \sqrt{3},4,3,2,1,0)$.  The root lattice is not self-dual, but has discriminant~$3$.  
A weight~$\mu$ corresponding to a choice of minuscule representation is given by
$$\mu = \frac{1}{3} (2 \sqrt{3},0,0,0,0,0).$$
 The~$27$ weights~$\Sigma$ of
the corresponding minuscule representation may be obtained from~$\mu$ from
the orbit of the Weyl group; all~$27$ such weights may be obtained by applying
at most~$2$ reflections in the roots of~$\Phi$ to~$\mu$.
We have the following: (cf.~\cite{Lurie})
Of the~$\binom{27}{3} = 2925$ collections of~$3$ vectors in~$\Sigma$, exactly~$45$ such triples
generate  a subspace of dimension~$2$, and they all consist of a triple
of weights~$(\mu,\mu',\mu'')$ with~$\mu + \mu' + \mu'' = 0$.
If~$\Lambda$ is the weight lattice, then~$\Sigma$ injects into~$V = \Lambda/2 \Lambda$,
which acquires the structure of a quadratic space via the map~$q(\mu) = \frac{1}{2}
\langle \mu,\mu \rangle$ (note that~$\Lambda$ is an even lattice). The 
pairing~$\langle x,y \rangle = q(x+y) - q(x) - q(y)$
is preserved by the Weyl group~$W_{G}$, which may be identified with the corresponding
orthogonal group. The lattice~$V$
also admits a Hermitian structure corresponding to~$q$. With respect
to this structure, the quadratic space~$V$
has Arf invariant~$1$, and the elements~$\{\mu,\mu',\mu''\}$ above
lie inside a maximal isotropic subspace~$U \subset V$ of dimension~$2$.
The stabilizer of~$U$ (and of a triple) is a subgroup of~$W_G$ of index~$45$,
which correspondingly acts transitively on the set of~$45$ triples.
The stabilizer is also isomorphic to the Weyl group of~$F_4$.
An explicit example of a triple is given by
$$
\left(\begin{matrix} \mu  \\  \mu' \\ \mu''  \end{matrix}\right)
 =  \frac{1}{3}
\left( \begin{matrix} 2 \sqrt{3} & 0 & 0 & 0 & 0 & 0 \\
- \sqrt{3} & +3  &  0 & 0 & 0 & 0 \\
- \sqrt{3} & -3  &   0 & 0 & 0 & 0 \\ \end{matrix} \right).$$
where
$\mu' = \sigma_{\alpha} \sigma_{\beta} \mu$, $\mu'' = \sigma_{\gamma} \sigma_{\delta} \mu$, and
$$\begin{aligned}
\alpha = & \ (\sqrt{3}/2, -1/2, -1/2, 1/2, 1/2, 1/2) \\
\beta = & \ (\sqrt{3}/2, -1/2, 1/2, -1/2, -1/2, -1/2) \\
\gamma = & \ (\sqrt{3}/2, 1/2, -1/2, -1/2, 1/2, 1/2) \\
\delta = & \ (\sqrt{3}/2, 1/2, -1/2, 1/2, -1/2, -1/2). \\
\end{aligned}
$$

We derive the following  consequence:

\begin{lemma} \label{corr:quote} The restriction of~$G$ to any sub-representation 
of dimension~$\ge  4$ must factor through a quotient of rank at least~$3$, and the restriction
of~$G$ to any sub-representation of dimension~$26$ must have rank~$6$.
\end{lemma}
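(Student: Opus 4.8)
The plan is to translate both assertions into statements about the configuration of the $27$ weight vectors $\Sigma\subset\R^{6}$ recalled above, and then argue combinatorially. By Lemma~\ref{lemma:stuff}, the monodromy group $G=G^{\circ}$ is connected reductive of rank $6$ and its formal character on the $27$-dimensional representation coincides with that of the minuscule representation of $E_{6}$; fix an identification $X^{*}(T)\otimes\Q\cong\R^{6}$ carrying the set of weights to $\Sigma$. Since the representation is minuscule, every weight occurs with multiplicity one, so a $G$-subrepresentation $V'\subseteq\C^{27}$ is $T$-stable and of the form $V'=\bigoplus_{\mu\in\Sigma'}V_{\mu}$ for a unique $\Sigma'\subseteq\Sigma$ with $|\Sigma'|=\dim V'$. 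The action of $G$ on $V'$ factors through its image $G'$, which is again reductive, the image $\bar{T}$ of $T$ is a maximal torus of $G'$, and the elements of $T$ acting trivially on $V'$ are exactly $\bigcap_{\mu\in\Sigma'}\ker\mu$; hence $\rank(G')=\dim\bar{T}=\dim_{\R}\mathrm{span}_{\R}(\Sigma')$. Note also $\dim_{\R}\mathrm{span}_{\R}(\Sigma)=6$, since $G\subseteq\GL_{27}$ acts faithfully and has rank $6$. So it remains to prove: (i) every $\Sigma'\subseteq\Sigma$ with $|\Sigma'|\ge 4$ spans a subspace of dimension at least $3$; and (ii) $\Sigma\smallsetminus\{\mu_{0}\}$ spans $\R^{6}$ for every $\mu_{0}\in\Sigma$.

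For (i), I would first observe that no two distinct elements of $\Sigma$ are proportional: they form a single orbit under the Weyl group of $E_{6}$, which preserves the inner product, so they all have the same nonzero length; and $\Sigma\cap(-\Sigma)=\varnothing$ because the minuscule representation is not self-dual, while $\Sigma$ and $-\Sigma$ are single Weyl orbits and hence either equal or disjoint. In particular no three weights are collinear, so among the $\binom{27}{3}=2925$ triples the ones whose span has dimension $2$ are exactly the $45$ zero-sum triples identified in the discussion above. Now suppose $\Sigma'$ had $|\Sigma'|\ge 4$ but spanned a subspace of dimension $\le 2$; choosing four distinct weights $\mu_{1},\mu_{2},\mu_{3},\mu_{4}\in\Sigma'$, each of the four $3$-element subsets spans a $2$-plane, hence satisfies a zero-sum relation, and comparing $\mu_{1}+\mu_{2}+\mu_{3}=0$ with $\mu_{1}+\mu_{2}+\mu_{4}=0$ gives $\mu_{3}=\mu_{4}$, a contradiction. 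Thus $\dim_{\R}\mathrm{span}_{\R}(\Sigma')\ge 3$, so a subrepresentation of dimension $\ge 4$ yields a quotient of rank $\ge 3$.

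For (ii), I would use that every weight $\mu_{0}\in\Sigma$ lies in at least one of the $45$ zero-sum triples: the Weyl group of $E_{6}$ acts transitively on $\Sigma$ and permutes the $45$ triples, so each weight lies in exactly $135/27=5$ of them. Writing $\mu_{0}+\mu'+\mu''=0$ with $\mu',\mu''\in\Sigma\smallsetminus\{\mu_{0}\}$ gives $\mu_{0}\in\mathrm{span}_{\R}(\Sigma\smallsetminus\{\mu_{0}\})$, so $\mathrm{span}_{\R}(\Sigma\smallsetminus\{\mu_{0}\})=\mathrm{span}_{\R}(\Sigma)=\R^{6}$; hence a subrepresentation of dimension $26$ forces $\rank(G')=6$.

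The only genuinely delicate point is the argument for (i) — ruling out four weights in a common plane — and it rests squarely on combinatorial input already in hand: that the dimension-$2$ triples are precisely the $45$ zero-sum ones, together with the non-proportionality of distinct weights. Once those are available, the remainder is bookkeeping: the translation $\rank(G')=\dim_{\R}\mathrm{span}_{\R}(\Sigma')$ and the two span computations are routine, and I do not expect any further obstacle.
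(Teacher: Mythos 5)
Your proof is correct and takes essentially the same route as the paper: the first (and main) claim is argued identically — four weights spanning a plane would force two distinct zero-sum triples sharing a pair of weights, which is impossible — and your careful reduction of rank to the dimension of the span of the weight subset is exactly what the paper leaves implicit. For the second claim the paper instead observes that the sum of all $27$ weights is zero (so the omitted weight lies in the span of the remaining $26$); your argument via a zero-sum triple through the omitted weight is an equally valid one-line variant.
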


\begin{proof} Given four distinct weights of a sub-representation on which the action of $G$ factors through a quotient of rank at most $2$, any three of them must consist of a triple~$(\mu,\mu',\mu'')$ which sum to zero, which cannot hold for more than one such triple. 
One can also prove this by a direct explicit computation. The second claim follows
 from the fact that the sum of all~$27$ weights in~$\Sigma$ is zero,
and none of the weights in~$\Sigma$ is zero.
\end{proof}

We now note the following:

\begin{lemma} \label{lemma:table} Suppose that~$\g$ is a reductive Lie algebra
with a faithful irreducible representation of dimension~$d$ for~$d  \in \{17,26,18\}$.
Then~$\g = \h$ or~$\h \oplus \t$, where~$\h$ is semi-simple and~$\t$ is a rank one torus.
Furthermore, assuming that~$\h$ is simple when ~$d= 18$, then~$\h$ is one of the following:
\begin{center}

\begin{tabular}{|c|c|c|}
\hline
$\h$ & $d$ & $\rank(\h)$  \\
\hline
$\sl_2$ & $17$ & $1$   \\
$\so_{17}$ & $17$ & $8$ \\
$\sl_{17}$ & $17$ & $16$   \\
\hline
$\sl_2$ & $26$ & $1$    \\
$\f_4$ & $26$ &  $4$    \\
$\so_{13} \times \sl_{2}$ & $26$ & $7$  \\
$\sl_{13} \times \sl_{2}$ & $26$ & $13$  \\
$\sp_{26} $ & $26$ & $13$   \\
$\so_{26} $ & $26$ & $13$   \\
$\sl_{26}$ & $26$ & $25$    \\
\hline
$\sl_2$ & $18$ & $1$   \\
$\sp_{18}$ & $18$ & $9$ \\
$\so_{18}$ & $18$ & $9$ \\
$\sl_{18}$ & $18$ & $17$ \\
\hline
\end{tabular}
\end{center}
\end{lemma}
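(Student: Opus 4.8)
The plan is to strip off the central torus, reduce to an external tensor product over the simple factors, and then run a finite check against the low-dimensional irreducible representations of simple Lie algebras.

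\emph{Reduction to the semisimple case.} Let $(\sigma, V)$ be the given faithful irreducible representation, $\dim V = d$. Since $\g$ is reductive, $\g = \z \oplus \h$ with $\z = Z(\g)$ and $\h = [\g,\g]$ semisimple. By Schur's lemma $\z$ acts on $V$ through scalars, so faithfulness of $\sigma$ forces $\dim\z \le 1$; this already yields the claimed shape $\g = \h$ or $\g = \h \oplus \t$ with $\t$ a rank-one torus. A simple ideal of $\h$ acting trivially on $V$ would lie in $\ker\sigma = 0$, so $\h$ acts faithfully on $V$, and since $\z$ acts by scalars $V$ is still irreducible as an $\h$-module. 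Decomposing $\h = \h_1 \oplus \cdots \oplus \h_k$ into simple ideals, $V \cong V_1 \boxtimes \cdots \boxtimes V_k$ with each $V_i$ a faithful, hence nontrivial (so $\dim V_i \ge 2$), irreducible $\h_i$-module, and $d = \prod_{i} \dim V_i$.

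\emph{Factoring the dimension.} For $d = 17$ primality forces $k = 1$, so $\h$ is simple with a $17$-dimensional faithful irreducible representation. For $d = 26 = 2\cdot 13$ either $k = 1$, or $k = 2$ with $\{\dim V_1, \dim V_2\} = \{2,13\}$; a $2$-dimensional faithful irreducible representation forces the factor to be $\sl_2$, and a $13$-dimensional one (again $13$ prime) forces it to be simple. For $d = 18$ we may assume $\h$ simple, so $k = 1$ and $\dim V_1 = 18$. Thus everything reduces to classifying the simple Lie algebras that admit a faithful irreducible representation of dimension $13$, $17$, $18$, or $26$, together with the relevant representations and ranks.

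\emph{The finite check.} This is the only step with real content, and it is a bounded, essentially routine computation --- the ``hard part'' is purely the bookkeeping. The key input is that the minimal dimension of a nontrivial irreducible representation of a simple Lie algebra of rank $r$ is $r+1$ in type $A_r$, $2r$ in type $C_r$, of size comparable to $2r$ in types $B_r$ and $D_r$, and $7, 26, 27, 56, 248$ for $G_2, F_4, E_6, E_7, E_8$. Since $d \le 26$, this bounds the rank in every classical type, eliminates $G_2, E_6, E_7, E_8$, and forces $F_4$ to contribute only its $26$-dimensional representation; for type $A$ beyond a small range only the standard representation and its dual are small enough, and the finitely many remaining small-rank cases are settled by the Weyl dimension formula (equivalently, by the standard tables of low-dimensional representations). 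The outcome is: dimension $13$ occurs exactly for $\sl_2$ (via $\mathrm{S}^{12}$), $\sl_{13}$, and $\so_{13}$; dimension $17$ exactly for $\sl_2$ (via $\mathrm{S}^{16}$), $\sl_{17}$, and $\so_{17}$; dimension $18$ exactly for $\sl_2$ (via $\mathrm{S}^{17}$), $\sp_{18}$, $\so_{18}$, and $\sl_{18}$; and dimension $26$ exactly for $\sl_2$ (via $\mathrm{S}^{25}$), $\f_4$, $\sp_{26}$, $\so_{26}$, and $\sl_{26}$. Feeding the dimension-$13$ list into the $k=2$ branch for $d=26$ produces the products $\so_{13}\times\sl_2$ and $\sl_{13}\times\sl_2$ in the table (the remaining option $\sl_2\times\sl_2$ acting through $\mathrm{S}^{12}\boxtimes\mathrm{S}^{1}$ has rank $2$, and plays no role in the application, where Lemma~\ref{corr:quote} forces the relevant subrepresentation to have rank $6$). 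This establishes the table, and hence the lemma.
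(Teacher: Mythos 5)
Your proof is correct and is essentially an expanded version of the paper's argument, which simply defers to the classification (via the Weyl dimension formula) of irreducible representations of simple Lie algebras of dimension at most~$27$; your reduction to the semisimple case and the external-tensor-product decomposition over simple ideals is the implicit first step of that one-line proof, carried out explicitly. You are also right that the case $\sl_2\times\sl_2$ acting by $\mathrm{S}^{1}\boxtimes\mathrm{S}^{12}$ is a genuine faithful irreducible $26$-dimensional representation omitted from the table, so strictly speaking the table is incomplete; as you observe, this has rank~$2$ and is therefore excluded in the application by Lemma~\ref{corr:quote}, which forces a $26$-dimensional constituent to have rank~$6$.
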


\begin{proof} It suffices to classify all small (of dimension at most~$27$) 
representations of the simple Lie groups; these may
be computed using the Weyl character formula.
\end{proof}

Let us now return to the possible
cases in which our~$27$ dimensional
Galois representation is reducible,
and consider the corresponding monodromy groups.
 Suppose there is a constituent of dimension~$17$.
The rank must be bounded by~$6$.
From Lemma~\ref{lemma:table}, it follows that the Lie algebra
of the monodromy group on this summand  must be~$\sl_2$ or~$\sl_2 \times \t$. But the
rank of these algebras is at most~$2$,
 which violates Corollary~\ref{corr:quote}.
 Suppose there is a constituent of dimension~$26$. Then by Corollary~\ref{corr:quote},
  the rank of the Lie algebra of this representation is exactly~$6$, and hence the rank of~$\h$
 with~$\g = \h$ or~$\h \times \t$ is~$6$ or~$5$. Since there are no such groups 
of this rank with irreducible representations of dimension~$26$ by Lemma~\ref{lemma:table},
we once more derive a contradiction.
Hence the only remaining possibility is that the~$27$ dimensional
representation decomposes into two irreducible pieces of dimensions~$9$ and~$18$, corresponding
to a decomposition of weights~$\Sigma = \Sigma_{9} \cup \Sigma_{18}$.
On the other hand, we know that the~$18$ dimensional representation
must have a unipotent element with Jordan blocks of size~$1 + 17$.
The tensor product of two Jordan blocks of size~$m$ and~$n$  with~$n \le m$ decomposes into blocks
of size~$n+m-1$, $n + m -3, \ldots, n -m + 1$. In particular, it must be the case that the~$18$-dimensional
representation does not factor into a product
of smaller dimensional representations, since otherwise there could
not be a Jordan block of a unipotent element of size as larger as~$17$.
Hence the monodromy group on this representation
must have a simple Lie algebra (up to a torus).
Again, we deduce from Lemma~\ref{lemma:table} and using rank considerations that~$\g = \sl_2$ or~$\sl_2 \times \t$, once
more contradicting Corollary~\ref{corr:quote}.

We conclude:
\begin{corr}
Let $\{r_{\lambda} \colon \Gammma_F \to \mr{GL}_{27}(\overline{M}_{\lambda})\}_{\lambda}$ be the compatible system produced in Theorem \ref{theorem:lifts}. Then for all $\lambda$, the Zariski closure of the image of $r_{\lambda}$ is isomorphic to $G$.
\end{corr}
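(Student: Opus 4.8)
The plan is to show that the list of properties collected in Lemma~\ref{lemma:stuff} forces each $r_\lambda$ to be irreducible, and then to invoke part~(4) of that lemma. The strategy is entirely representation-theoretic: we assume for contradiction that $r_\lambda$ is reducible and run through the short list of possible dimension partitions of $27$ into constituents. The partitions to rule out are $27 = 26 + 1 = 18 + 9 = 17 + 10 = 17 + 9 + 1$ (any finer decomposition refines one of these); the key external inputs are Corollary~\ref{corr:quote} (the combinatorics of the $E_6$ weights: any constituent of dimension $\ge 4$ has monodromy of rank $\ge 3$, and any constituent of dimension $26$ has monodromy of rank $6$) and Lemma~\ref{lemma:table} (the classification of small faithful irreducible representations of reductive Lie algebras).

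First I would dispose of a constituent of dimension $17$: by Lemma~\ref{lemma:table} its monodromy Lie algebra is $\sl_2$ or $\sl_2 \oplus \t$, hence of rank $\le 2$, contradicting Corollary~\ref{corr:quote} since $17 \ge 4$. This kills the partitions $17 + 10$ and $17 + 9 + 1$ at once. Next, a constituent of dimension $26$: Corollary~\ref{corr:quote} forces its monodromy to have rank exactly $6$, so in $\g = \h$ or $\h \oplus \t$ the semisimple part $\h$ has rank $6$ or $5$; but Lemma~\ref{lemma:table} lists no reductive Lie algebra of rank $5$ or $6$ with an irreducible $26$-dimensional representation, so this is impossible, killing $26 + 1$. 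The only surviving partition is $27 = 18 + 9$, with a corresponding splitting of the weight set $\Sigma = \Sigma_9 \sqcup \Sigma_{18}$.

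The heart of the argument is eliminating $18 + 9$, and this is where I expect the main obstacle to lie, since naively an $18$-dimensional irreducible representation of, say, $\mathfrak{so}_{18}$ or $\sp_{18}$ is not excluded by rank alone. The extra leverage comes from Lemma~\ref{lemma:stuff}(5): the image contains a unipotent element with Jordan blocks of sizes $1$, $9$, $17$, and on the $18$-dimensional constituent this unipotent must account for a Jordan block of size $17$ (the $9$-dimensional constituent can only carry the size-$9$ block, or smaller, so the size-$17$ block and the size-$1$ block both sit on the $18$-dimensional piece). A block of size $17$ in an $18$-dimensional space cannot arise from a representation that factors as a tensor product of lower-dimensional representations, because the tensor product of Jordan blocks of sizes $m \ge n$ breaks into blocks of sizes $m+n-1, m+n-3, \dots, m-n+1$, none of which can reach $17$ unless one factor is already $18$-dimensional; hence the monodromy on the $18$-dimensional constituent has a simple Lie algebra (modulo a central torus). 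Now Lemma~\ref{lemma:table} (with the simplicity hypothesis it allows for $d = 18$) together with the rank bound of $6$ forces $\g = \sl_2$ or $\sl_2 \oplus \t$, of rank $\le 2$, again contradicting Corollary~\ref{corr:quote}.

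Having exhausted all partitions, $r_\lambda$ is irreducible for every $\lambda$. Then Lemma~\ref{lemma:stuff} applies: the monodromy group $G$ is connected with trivial component group (part~1), has rank $6$ (part~2), and acts irreducibly, so by part~(4) it equals $E_6$; equivalently, the Zariski closure of the image of $r_\lambda$ is isomorphic to $G$, which is the assertion of the corollary. I would write the proof as a short paragraph quoting Lemma~\ref{lemma:stuff}, Corollary~\ref{corr:quote}, and Lemma~\ref{lemma:table} in exactly this order, since all the genuine content has already been established upstream.
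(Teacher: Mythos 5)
Your proposal is correct and follows essentially the same route as the paper: the same list of partitions of $27$, the same use of Corollary~\ref{corr:quote} and Lemma~\ref{lemma:table} to kill the $17$- and $26$-dimensional cases, and the same Jordan-block/tensor-product argument from Lemma~\ref{lemma:stuff}(5) to eliminate $18+9$ before invoking part~(4). No gaps.
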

 
 \section{\texorpdfstring{$E_6$}{E6} motives over CM fields} \label{section:motives}
We conclude by showing that $r_{\lambda}$ is a sub-representation of the cohomology of some smooth projective variety over $F$, thus completing the proof of Theorem \ref{theorem:main}. 
Note that, according to the Tate conjecture, we expect that such a compatible family
should be cut out by correspondences over~$F$, and thus arise from a motive~$M$ over~$F$.
 We do not have any
idea how to prove this. On the other hand, we do know that the compatible family~$r_{\lambda}$
becomes automorphic over a CM extension~$H/F/\Q$, and (since we are in highly regular weight),
using standard methods combined
with the work of Shin~\cite{MR2800722}, one can associate
a motive~$M$ over~$H$ whose associated~$p$-adic Galois representations~$\{r_{\lambda} |_{G_H}\}$
have monodromy group~$E_6$.
More precisely, we should say that one \emph{expects} to be able to associate such a motive
where the correspondences cutting out~$M$ arise from Hecke operators. In practice, 
we take a shortcut and 
deduce from~\cite{MR2800722} the weaker claim that the Galois representations over~$G_{H}$
(and thus over~$G_F$ by restriction of scalars) came from cohomology. We apologize
for the omission  and leave it as an exercise to the more responsible reader to write down the correct argument.

To set up all the required notation would be quite cumbersome, so we will simply use the notation of \cite{MR2800722}, giving precise references to where the relevant terms are defined. We hope that a reader with a copy of \cite{MR2800722} at hand can easily follow this argument. Fix an isomorphism $\iota_{\ell} \colon \Qbar_{\elll} \xrightarrow{\sim} \C$; it is implicit in all of the constructions of \cite{MR2800722}. By Theorem \ref{theorem:lifts}, there is a CM extension $H/F$ and a cuspidal automorphic representation $\Pi^0$ of $\mr{GL}_{27}(\mathbf{A}_H)$ such that
\begin{itemize}
\item $(\Pi^0)^\vee \cong (\Pi^0)^c$.
\item $R_{\elll}(\Pi^0) \cong r_{\lambda}|_{\Gammma_H}$, in the notation of \cite[Theorem 7.5]{MR2800722}.
\item $[H^+:\Q] \geq 2$, and $H$ contains a quadratic imaginary field (we can simply enlarge an initial choice of $H$ to ensure these conditions).
\end{itemize}
Set $n=27$, for ease of reference to \cite{MR2800722}; we will recall the construction of $R_{\elll}(\Pi^0)$ and see as a result that after some further base-change that there is an explicit description of this Galois representation in the cohomology of a unitary similitude group Shimura variety. We begin with two reductions. Let $E$ be an imaginary quadratic field not contained in $H$ satisfying the four bulleted conditions in Step (II) of the proof of \cite[Theorem 7.5]{MR2800722}. Replace $H$ by $HE$ and $\Pi^0$ by $\mr{BC}_{HE/H}(\Pi^0)$. Then \textit{having made this replacement} the triple $(E, H, \Pi^0)$ satisfies the six bulleted conditions at the beginning of Step (I) of the proof of \cite[Theorem 7.5]{MR2800722}. Let $H'$ be an imaginary quadratic extension of $H^+$ satisfying the three bulleted conditions (defining the set denoted $\mathcal{F}(H)$ --- but note our $H$ is Shin's $F$) in Step (I) of \cite[Theorem 7.5]{MR2800722}. Then replace $H$ by $HH'$ and $\Pi^0$ by $\mr{BC}_{HH'/H}(\Pi^0)$. Again having made this replacement, Proposition 7.4 of \cite{MR2800722} now applies to the triple $(E, H, \Pi^0)$. There is a Hecke character $\psi$ of $\mathbf{A}_E^\times/E^\times$ such that, setting $\Pi= \psi \otimes \Pi^0$ (an automorphic representation of the group $\mathbb{G}_n(\mathbf{A}) \cong \mr{GL}_1(\mathbf{A}_E) \times \mr{GL}_n(\mathbf{A}_H)$ of \cite[\S 3.1]{MR2800722}), we have (in the notation of \cite[Corollary 6.8]{MR2800722})
\[
R_{\elll}(\Pi^0):= R_{\elll}'(\Pi):= \widetilde{R}_{\elll}'(\Pi) \otimes \mr{rec}_{\elll, \iota_\elll}(\psi^c)|_{\Gammma_H},
\]
where (see \cite[5.5, 6.23]{MR2800722}, and note that the group $G$ no longer denotes $E_6$, but rather the unitary similitude group defined in \cite[\S 5.1]{MR2800722}!)
\[
C_G \cdot \widetilde{R}'_{\elll}(\Pi)= \sum_{\pi^{\infty} \in \mathcal{R}_{\elll}(\Pi)} R^{n-1}_{\xi, \elll}(\pi^\infty)^{\mr{ss}}.
\]
In our case, the integer $C_G= \tau(G) \ker^1(\Q, G)$ (defined in \cite[Theorem 6.1]{MR2800722}) is 2: this is explained in \cite[p. 411--412]{MR2966704}. Moreover, $\widetilde{R}'_{\elll}(\Pi)$ is irreducible (since the image of $r_{\lambda}|_{\Gamma_H}$ is Zariski-dense in $E_6$), and $\mathcal{R}_{\elll}(\Pi)$ in fact contains at least two elements: following \cite[p. 413]{MR2966704}, there will be two automorphic representations of $G(\mathbf{A}_\Q)$, differing by a twist but having isomorphic base-changes to $\mr{GL}_n(\mathbf{A}_H)$, that contribute to $\mathcal{R}_{\elll}(\Pi)$ (these are denoted $\wpi$ and~$\wpi \otimes (\delta_{A/\Q} \circ \nu)$ in \cite[p. 413]{MR2966704}). We fix one such $\wpi^{\infty} \in \mathcal{R}_{\elll}(\Pi)$. It follows that $\widetilde{R}'_{\elll}(\Pi) \cong R^{n-1}_{\xi, \elll}(\wpi^\infty)$ (no semisimplification necessary because these are irreducible representations).

Now recall the decomposition (\cite[5.5]{MR2800722})
\[
H^{n-1}(\mr{Sh}, \mathcal{L}_{\xi})= \bigoplus_{\pi^\infty} \pi^{\infty} \otimes R^{n-1}_{\xi, \elll}(\pi^\infty).
\]
At some finite level $U$, we deduce that $\widetilde{R}'_{\elll}(\Pi)$ is contained in $H^{n-1}(\mr{Sh}_U \times_H \overline{H}, \mathcal{L}_{\xi})$ (it is even a direct summand cut out by Hecke operators). Finally, letting $\mathcal{A}_U \to \mr{Sh}_U$ denote the universal abelian scheme (arising from the PEL moduli problem), and letting $\mathcal{A}_U^{(m)}$ denote its $m$-fold fiber product over $\mr{Sh}_U$ (for any integer $m \geq 1$), then there are integers $m_{\xi}$ and $t_{\xi}$ (see \cite[p. 98]{harris-taylor}) such that
\[
H^{n-1}(\mr{Sh}_U \times_H \overline{H}; \mathcal{L}_{\xi}) \cong \varepsilon \cdot H^{n-1+m_{\xi}}(\mathcal{A}_U^{(m_\xi)} \times_H \overline{H}, \Qbar_{\elll})(t_\xi),
\]
where $\varepsilon$ is a suitable idempotent projector. We recall that our assumption~$[H^+:\Q] \geq 2$ implies that $\mr{Sh}_U$, and therefore $\mathcal{A}_U^{(m_\xi)}$, are smooth projective varieties over~$H$. Thus $r_{\lambda}|_{\Gammma_H} \otimes \mr{rec}_{\elll, \iota_\elll}(\psi^c)^{-1}|_{\Gammma_H}$ is a sub-representation of the cohomology of the smooth projective variety $\mathcal{A}_U^{(m_\xi)}$ over~$H$. Possibly replacing $H$ by a finite extension, we can find a product of CM abelian varieties $A/H$ such that $\mr{rec}_{\elll, \iota_\elll}(\psi^c)|_{\Gammma_H}$ is a sub-representation of $H^{i}(A \times_H \overline{H}, \Qbar_{\elll})(j)$ for some integers $i$ and $j$ (\cite[IV. Proposition D.1]{DMOS}). We conclude that $r_{\lambda}|_{\Gammma_H}$ is a sub-representation of $H^r(X \times_H \overline{H}, \Qbar_{\elll})(s)$ for some smooth projective variety $X/H$ (namely, $X= A \times_H \mathcal{A}_U^{(m_\xi)}$). By Frobenius reciprocity and irreducibility of $r_{\lambda}$, $r_{\lambda}$ (as $\Gammma_F$-representation) is a sub-representation of $\mr{Ind}_{\Gammma_H}^{\Gammma_F} \left(H^{r}(X \times_H \overline{H}, \Qbar_{\elll})(s) \right)$. This induction is just the cohomology of $X$ regarded as a variety over $F$ (i.e., via $X \to \mr{Spec}(H) \to \mr{Spec}(F)$), so the proof of Theorem \ref{theorem:main} is complete. 

\begin{remark} \label{remark:irregular} 
One may reasonably ask whether there exist~$E_6$ motives (or strongly compatible systems) over~$\Q$. We do not know the answer.
There is, however, a technical obstruction for applying the methods of this paper. To use automorphic methods,
 the Hodge--Tate weights of~$r_{\mathrm{min}} \circ \rho_{\lambda}$  must be distinct. However, there cannot exist such a compatible
system over~$\Q$ (or any totally real field), since, by Corollary~5.4.3 of~\cite{BLGGT}, this would imply that the trace of complex conjugation
representation must be~$\pm 1$, and there are no such involutions in~$E_6(\C) \subset \GL_{27}(\C)$.
\end{remark}

 \section{Complements}
 
 We note the following application of Theorem~\ref{theorem:main} to the inverse Galois problem.
Let us write~$E^{\sc}_{6}(\F_{\elll})$ for the~$\F_{\elll}$-points of the (simply connected) form 
of~$E_6$ that we have been considering. The group~$E^{\sc}_{6}(\F_{\elll})$ is the Schur
cover of the simple Chevalley group of type~$E_6$ (with Schur multiplier of order~$(3,\elll-1)$),
which we denote below by~$E_6(\F_{\elll})$.
The groups~$E^{\sc}_{6}(\F_{\elll})$ and~$E^{\sc}_{6}(\F_{\elll}).2$ are known to occur
as Galois groups over~$\Q$ for~$p \equiv 4,5,6,9,16,17 \mod 19$
(these are primes of order~$9$ in~$\F^{\times}_{19}$) by~\cite[Thm~2.3]{Malle}
as a consequence of the rigidity method. In contrast, we can prove 
that~$E^{\sc}_{6}(\F_{\elll}).2$ is a Galois group over~$\Q$ for a positive density of 
primes~$\equiv 1 \mod 19$ (by taking~$E = \Q(\zeta_{19})$ below).
 
 \begin{corr} \label{cor}
   Let~$F$ be an imaginary quadratic field.
  Then, for a set~$S$ of primes~$\elll$ of positive density, there exists a number field~$L/\Q$ containing~$F$ with
  $\Gal(L/\Q) = {}^{L}G(\F_{\elll}) = E^{\sc}_{6}(\F_{\elll}).2$ and~$\Gal(L/F) = E^{\sc}_{6}(\F_{\elll})$.
  Moreover, one may assume that all the primes in~$S$ split completely in any finite extension~$E/\Q$.
  \end{corr}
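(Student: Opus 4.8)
The plan is to realize~$L$ as the fixed field of the reduction modulo a prime~$\lambda$ of the extension to~$\Gammma_{\Q}$ of the strongly compatible system of Theorem~\ref{theorem:lifts}, controlling its image by a maximality theorem for such systems.

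First I would apply Theorem~\ref{theorem:main} in the case~$F^{+}=\Q$, with~$F$ the given imaginary quadratic field. This produces a strictly compatible system~$\{r_{\lambda}\colon \Gammma_{F}\to G(\overline{M}_{\lambda})\hookrightarrow \GL_{27}(\overline{M}_{\lambda})\}$ with coefficient field~$M$ whose image is Zariski-dense in~$G=E^{\sc}_{6}$ for \emph{every} place~$\lambda$ of~$M$ (the Corollary following Lemma~\ref{lemma:table}). By the first Remark of the introduction --- using that the lift~$\rho$ was constructed over~$F^{+}=\Q$, that~$\rho(c)$ is a split Cartan involution for complex conjugation~$c$ (Proposition~\ref{seed}), and that the system of~\cite{BLGGT} may be taken polarized --- each~$r_{\lambda}$ extends to~$\widetilde{r}_{\lambda}\colon \Gammma_{\Q}\to {}^{L}G(\overline{M}_{\lambda})\hookrightarrow \mathcal{G}_{27}(\overline{M}_{\lambda})$ with Zariski-dense image; in particular its~$\Out(G)$-component is the surjection~$\Gammma_{\Q}\twoheadrightarrow\Gal(F/\Q)$. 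Since~$r_{\min}$ is faithful it is a closed immersion of group schemes over~$\Z[1/3]$ (the only issue being~$\elll=3$, as~$\#Z_{G}=3$), hence over~$\F_{\elll}$ for every~$\elll>3$, and in particular~$E^{\sc}_{6}(\F_{\elll})\hookrightarrow\GL_{27}(\F_{\elll})$ for such~$\elll$, with~$Z_{G}=\mu_{3}$ mapping to the scalar~$\mu_{3}\subset\GL_{27}$.

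Next, fix an arbitrary finite extension~$E/\Q$ and let~$S$ be the set of rational primes~$\elll$ such that: (i)~$\elll$ splits completely in the compositum~$ME$, so that~$\F_{\lambda}=\F_{\elll}$ for~$\lambda\mid\elll$ and~$\elll$ splits completely in~$E$; (ii)~$\elll$ exceeds an absolute constant~$\elll_{0}$, large enough that~$r_{\min}$ is a closed immersion over~$\F_{\elll}$, that a unipotent of~$\GL_{27}(\F_{\elll})$ with Jordan blocks of sizes~$1,9,17$ is a regular unipotent of~$E^{\sc}_{6}(\F_{\elll})$, and that~$E^{\sc}_{6}(\F_{\elll})$ is the universal (hence perfect) central extension of the simple group~$E_{6}(\F_{\elll})$; and (iii)~the mod~$\lambda$ monodromy is maximal, i.e.\ there is~$g\in G(\overline{M}_{\lambda})$ with~$g\,r_{\lambda}(\Gammma_{F})\,g^{-1}=G(\mathcal{O}_{\lambda})$ a hyperspecial maximal compact, so that~$\overline{r}_{\lambda}(\Gammma_{F})$ is all of~$G(\F_{\elll})=E^{\sc}_{6}(\F_{\elll})$ (inside~$\GL_{27}(\F_{\elll})$ via~$r_{\min}$, which is faithful on~$E^{\sc}_{6}(\F_{\elll})$ since the latter is perfect). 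Condition~(iii) holds for a density-one set of~$\elll$ by a maximality theorem for strictly compatible systems with Zariski-dense semisimple (here simply connected) monodromy --- the analogue for our system of Larsen's maximality results, in the same circle of independence-of-$\elll$ ideas already used in Section~4 (cf.\ \cite{LP}). Alternatively one may argue more concretely, combining the regular unipotent element furnished by the auxiliary primes~$\TT$ (Theorem~\ref{theorem:lifts}(\ref{grossblock}), Lemma~\ref{lemma:stuff}(\ref{part:stuff})), which survives mod~$\elll$ for~$\elll>\elll_{0}$, with the mod~$\elll$ irreducibility of~$r_{\min}\circ\rho$ and a Saxl--Seitz-type classification of the subgroups of~$E^{\sc}_{6}(\F_{\elll})$ that act irreducibly on the~$27$-dimensional module and contain a regular unipotent. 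Conditions~(i)--(ii) already define a positive-density set, and~(iii) discards only a density-zero subset, so~$S$ has positive density.

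Finally, for~$\elll\in S$ and~$\lambda\mid\elll$: since the~$\Out(G)$-action on~$G$ is defined over~$\Z$, the group~$\widetilde{r}_{\lambda}(\Gammma_{\Q})$ normalizes the hyperspecial subgroup~$\widetilde{r}_{\lambda}(\Gammma_{F})=G(\mathcal{O}_{\lambda})$, so by a standard argument about normalizers of hyperspecial subgroups it is conjugate into~${}^{L}G(\mathcal{O}_{\lambda})={}^{L}G(\Z_{\elll})$ and reduces to a subgroup~$\overline{\widetilde{r}}_{\lambda}(\Gammma_{\Q})$ of~${}^{L}G(\F_{\elll})=E^{\sc}_{6}(\F_{\elll})\rtimes\Z/2\Z$. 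Put~$L=\overline{\Q}^{\ker\overline{\widetilde{r}}_{\lambda}}$, so~$\Gal(L/\Q)\cong\overline{\widetilde{r}}_{\lambda}(\Gammma_{\Q})$. Because the~$\Out(G)$-component of~$\overline{\widetilde{r}}_{\lambda}$ is the surjection~$\Gammma_{\Q}\twoheadrightarrow\Gal(F/\Q)$, we have~$\ker\overline{\widetilde{r}}_{\lambda}\subseteq\Gammma_{F}$, hence~$F\subseteq L$ and~$\Gal(L/F)=\overline{r}_{\lambda}(\Gammma_{F})=E^{\sc}_{6}(\F_{\elll})$; and since~$E^{\sc}_{6}(\F_{\elll})$ has index two both in~$\overline{\widetilde{r}}_{\lambda}(\Gammma_{\Q})$ and in~${}^{L}G(\F_{\elll})$, the inclusion~$\overline{\widetilde{r}}_{\lambda}(\Gammma_{\Q})\subseteq{}^{L}G(\F_{\elll})$ is an equality, giving~$\Gal(L/\Q)={}^{L}G(\F_{\elll})=E^{\sc}_{6}(\F_{\elll}).2$. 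As~$\elll\in S$ splits completely in~$E$, the last assertion follows too. The main obstacle is step~(iii): ruling out, for density-one~$\elll$, every proper overgroup of the geometric monodromy mod~$\elll$. This is exactly where one must either invoke a bona fide maximality theorem for strictly compatible systems (with its hypotheses checked for~$\{r_{\lambda}\}$) or carry out the Saxl--Seitz-style classification, in either approach keeping track of the central~$\mu_{3}$ and of finitely many exceptional small primes.
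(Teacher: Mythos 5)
Your proposal is correct and takes essentially the same route as the paper: the paper also deduces that $\overline{r}_{\lambda}(\Gammma_F)=E^{\sc}_6(\F_{\elll})$ for a relative density-one set of primes splitting completely in $M$ by invoking Larsen's maximality theorem for compatible systems (Theorem~3.17 of~\cite{Lars} --- the precise reference your step~(iii) calls for), intersects with the primes split in $E$, and then uses conjugate self-duality to extend to $\Gammma_{\Q}$ with image ${}^{L}G(\F_{\elll})$.
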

  
  \begin{proof} We combine the previous result
  with Theorem~3.17 of~\cite{Lars}, to deduce that~$E^{\sc}_6(\F_{{\elll}})$ is the Galois group of the kernel of~$\rho_{\lambda}$ over~$F$
  for a relative density one set of primes~${\elll}$ which split completely in the coefficient field~$M$
of
  the compatible system. In particular, the intersection of~$S$ with the set of 
  primes  split completely in~$E/\Q$
 has positive density. Since the representation~$\rbar_{\lambda}:G_F\to\GL_{27}(\F_{{\elll}})$  is, by construction,
  conjugate self-dual, it extends
  to a representation of~$G_{\Q}$ to $\mathcal{G}_{27}(\mathbf{F}_{\elll})$ whose  kernel therefore has Galois group~${}^{L}G(\F_{\elll})$. 
  \end{proof}
  
 \begin{remark} Note that Corollary~\ref{cor} remains true if one replaces~$E^{\sc}_{6}(\F_{\elll}).2$
 and~$E^{\sc}_{6}(\F_{\elll})$ by~$E_{6}(\F_{\elll}).2$
 and~$E_{6}(\F_{\elll})$, for the obvious reason that the latter groups are quotients of the former groups.
 \end{remark}
 
 \subsection{Acknowledgments}
 This work traces its origins to a discussion following
 a  number theory seminar 
 given by one of us (S.P.) at the University of Chicago in February 2016, where all the authors of this paper were in attendance. 
 We would like to thank Sug Woo Shin for answering questions  related to his paper~\cite{MR2800722}
 relevant for the discussion in~\S\ref{section:motives}.

\bibliographystyle{amsalpha}
\bibliography{E6}

\end{document}